\let\footnote=\endnote
\def\reals{\mathbb{R}}
\def\comp{\raise 1pt \hbox{$\scriptstyle\circ$}}
\def\argmax{\mathop{\rm argmax}\limits}
\def\min{\mathop{\rm min\ }\limits}
\def\max{\mathop{\rm max\ }\limits}
\def\st{\mathop{\rm subject\ to\ }}
\def\upto{{\raise 1pt \hbox{$\scriptstyle \,\nearrow\,$}}}
\def\downto{{\raise 1pt \hbox{$\scriptstyle \,\searrow\,$}}}
\newtheorem{theorem}{Theorem}
\begin{document}
\begin{frontmatter}
	\title{A General Purpose Exact Solution Method for Mixed Integer Concave Minimization Problems}
	\author{Ankur Sinha}
\ead{asinha@iima.ac.in}
\author{Arka Das}
\ead{phd17arkadas@iima.ac.in}
\author{Guneshwar Anand}
\ead{guneshwar.anand-phd19@iimv.ac.in}
\author{Sachin Jayaswal}
\ead{sachin@iima.ac.in}
\address{Production and Quantitative Methods, Indian Institute of Management Ahmedabad, Gujarat, India, 380015}

\begin{abstract}
		In this article, we discuss an exact algorithm for solving mixed integer concave minimization problems. A piecewise inner-approximation of the concave function is achieved using an auxiliary linear program that leads to a bilevel program, which provides a lower bound to the original problem. The bilevel program is reduced to a single level formulation with the help of Karush-Kuhn-Tucker (KKT) conditions. Incorporating the KKT conditions lead to complementary slackness conditions that are linearized using BigM, for which we identify a tight value for general problems. Multiple bilevel programs, when solved over iterations, guarantee convergence to the exact optimum of the original problem. Though the algorithm is general and can be applied to any optimization problem with concave function(s), in this paper, we solve two common classes of operations and supply chain problems; namely, the concave knapsack problem, and the concave production-transportation problem. The computational experiments indicate that our proposed approach outperforms the customized methods that have been used in the literature to solve the two classes of problems by an order of magnitude in most of the test cases. 
\end{abstract}
	
\begin{keyword}
Global Optimization, Concave Minimization, Mixed Integer Non-Linear Programming, Knapsack Problem, Production-Transportation Problem
\end{keyword}
	
\end{frontmatter}
\section{Introduction}\label{Intro}
The interest in non-convex optimization is motivated by its applications to a wide variety of real-world problems, including
concave knapsack problems \citep{sun2005exact,han2017online}, production-transportation problem \citep{holmberg1999production,kuno2000lagrangian,saif2016supply}, facility location problems with concave costs \citep{soland1974optimal,ni2021branch}, location-inventory network design problems \citep{li2021location,farahani2015location,shen2007incorporating,jeet2009logistics}, concave minimum cost network flow problems \citep{guisewite1990minimum,fontes2007heuristic}, etc. 
Non-convexities often arise, in the above problems, due to the presence of concave functions either in the objective or in the constraints. It is difficult to solve these optimization problems exactly, and hence the problem has been of interest to the optimization community since the 1960s.\par
One of the earliest studies on non-convex optimization problems is by \cite{tuy1964concave}, where the author proposed a cutting plane algorithm for solving concave minimization problems over a polyhedron. The proposed algorithm was based on the partitioning of the feasible region, where the partitions were successively eliminated using Tuy cuts. However, the algorithm had a drawback since it was not guaranteed to be finite, which was tackled later by \cite{zwart1974global, majthay1974quasi}. Apart from cutting plane approaches, researchers also used relaxation-based ideas. For instance, \cite{falk1976successive,carrillo1977relaxation} computed successive underestimations of the concave function to solve the problem optimally. Branch-and-bound based approaches are also common to solve these problems, where the feasible region is partitioned into smaller parts using branching \citep{falk1969algorithm,horst1976algorithm,ryoo1996branch,tawarmalani2004global}. Other ideas for handling concavities are based on extreme point ranking \citep{murty1968solving,taha1973concave} or generalized Benders decomposition \citep{floudas1989global,li2011nonconvex}. The limitations of some of the above studies are one or more of the following: applicable only to a specific class of concave minimization problems; strong regularity assumptions; non-finite convergence; and/or convergence to a local optimum. Due to these limitations, there is also a plethora of specialized heuristics and meta-heuristics in the literature to obtain good quality or approximate solutions in {\it less} time. However, most of the heuristics and meta-heuristics do not guarantee convergence. Therefore, the idea of obtaining good quality solutions is often questioned as there is no way to ascertain how far the solution is from the global optimum.\par
In this paper, we discuss an algorithm for concave minimization problems, which requires few assumptions about the problem structure thereby making our approach general. We convert the concave minimization problem into a bilevel program that provides an approximate representation of the original problem. Solving bilevel programs over multiple iterations guarantee convergence to the global optimum of the original problem. While solving a bilevel program, one often attempts to reduce the program to a single level. Interestingly, in our approach converting a single level program to a bilevel program is the key to solving the original concave minimization problem.
The problem studied in this paper is also studied in the area of difference-of-convex (DC) programming, for instance, the work by \cite{strekalovsky2015local} comes close to our study. However, there have been challenges in directly implementing many of the DC programming approaches on operations and supply chain problems, as the problems considered are often large dimensional mixed integer problems for which obtaining the exact solution in reasonable time is difficult. We design and implement a piecewise-linear inner approximation method that is able to solve large dimensional operations and supply chain problems that involve mixed integers and concavities. 
The method relies on the piecewise-linear inner approximation \citep{rockafellar1970convex} approach, which replaces the concave function to arrive at a bilevel formulation that leads to the lower bound of the original problem. The bilevel optimization problem is solvable using the Karush-Kuhn-Tucker (KKT) approach. Through an iterative procedure, wherein multiple bilevel programs are solved, the method converges to the global optimum of the original problem. The method can be used to solve concave minimization problems with continuous or discrete variables exactly, as long as the concavities in the optimization problem are known. The reduction of the bilevel program to a single level introduces a BigM, for which we identify a tight value. Though the identification of a tight value of BigM is often downplayed as a contribution, we would like to highlight it as a major contribution, given that it is applicable to general problems, and also because it has been recently shown by \cite{kleinert2020there} that identifying a tight BigM in similar contexts is an NP hard problem. The choice of an appropriate value for BigM, in our case, makes the method highly competitive and applicable to large dimensional mixed integer problems. 
The structure of the bilevel problem is exploited to arrive at a tight value of BigM for general problems. We solve two classes of optimization problems in this paper to demonstrate the efficacy of our method: (i) concave knapsack problem; and (ii) concave production-transportation problem. For the concave production-transportation problem, we further consider two sub-classes: (a) single sourcing; and (b) multiple sourcing that have quite different formulations. We show that the proposed exact method, which is general, beats the existing specialized methods for solving the application problems by a large margin.\par
%
The rest of the paper is organized as follows. We provide the algorithm description, followed by the convergence theorems and proofs in Section~\ref{sec:algorithmDescription}. The concave knapsack problems and production-transportation problems are discussed in Section~\ref{sec:knapsackprob} and Section~\ref{sec:prodtransprob}, respectively. Each of these sections contains a brief survey, problem description, and computational results for its respective problem. Finally, we conclude in Section~\ref{sec:conclusions}. The paper also has an Appendix, where we show the working of the algorithm on two sample problems.

\section{Algorithm Description}\label{sec:algorithmDescription}
We consider optimization problems of the following kind
\begin{align}
	\min_{x} & f(x) + \phi(x) \label{eq:objOrig}\\
	\st & g_i(x) \leq 0, \qquad\qquad i=1,\dots,I \\ 
	& x^l_k \le x_k \le x^u_k, \qquad k = 1,\ldots,n \label{eq:consConvexOrig}
\end{align}
where $f(x)$ and $g(x)$ are convex, $\phi(x)$ is strictly concave and $x \in \reals^{n}$. Note that there is no restriction on $x$, which may be integer or continuous. The functions are assumed to be Lipschitz continuous. 
For a given set of $\tau$ sample points $S_c = \{ z^{1}, z^{2}, \ldots, z^{\tau} \}$, where each $z^{j}$ is an $n$-dimensional point, the function $\phi(x)$ can be approximated as follows: (\cite{rockafellar1970convex}):
\begin{align}
\hat{\phi}(x|S_c) = \max_{\mu} \left\{ \sum_{j=1}^{\tau} \mu_j \phi(z^{j}): \sum_{j=1}^{\tau} \mu_j=1, \sum_{j=1}^{\tau} \mu_j z^{j}_k=x_k, k=1,\ldots,n, \mu_j\ge0, j=1,\ldots,\tau \right\} \label{eq:innerApprox}
\end{align}
which is a linear program with $x$ as a parameter and $\mu$ as a decision vector for the linear program. 
For brevity, we will represent the approximation $\hat{\phi}(x|S_c)$ as $\hat{\phi}(x)$.

Figures~\ref{fig:expl1} and~\ref{fig:expl2} provide an illustration of the idea behind the working of the above linear program for piecewise approximation of a concave function in a single variable ($n=1$). Figure~\ref{fig:expl1} shows how a linear combination of the approximation points are used to represent the entire shaded region. Thereafter, Figure~\ref{fig:expl2} shows how maximization in the shaded region leads to $\hat{\phi}(x)$ for any given $x$. Figure~\ref{fig:approximation4points} again represents the piecewise approximation graphically and shows how it improves with the addition of a new point in the approximation set. Note that the feasible region with 5 points is a superset of the feasible region with 4 points. We will use this property later while discussing the convergence properties of the algorithm.
	\begin{figure}
		\centering
		\begin{minipage}{.49\textwidth}
			\centering
			\includegraphics[width=1.0\linewidth]{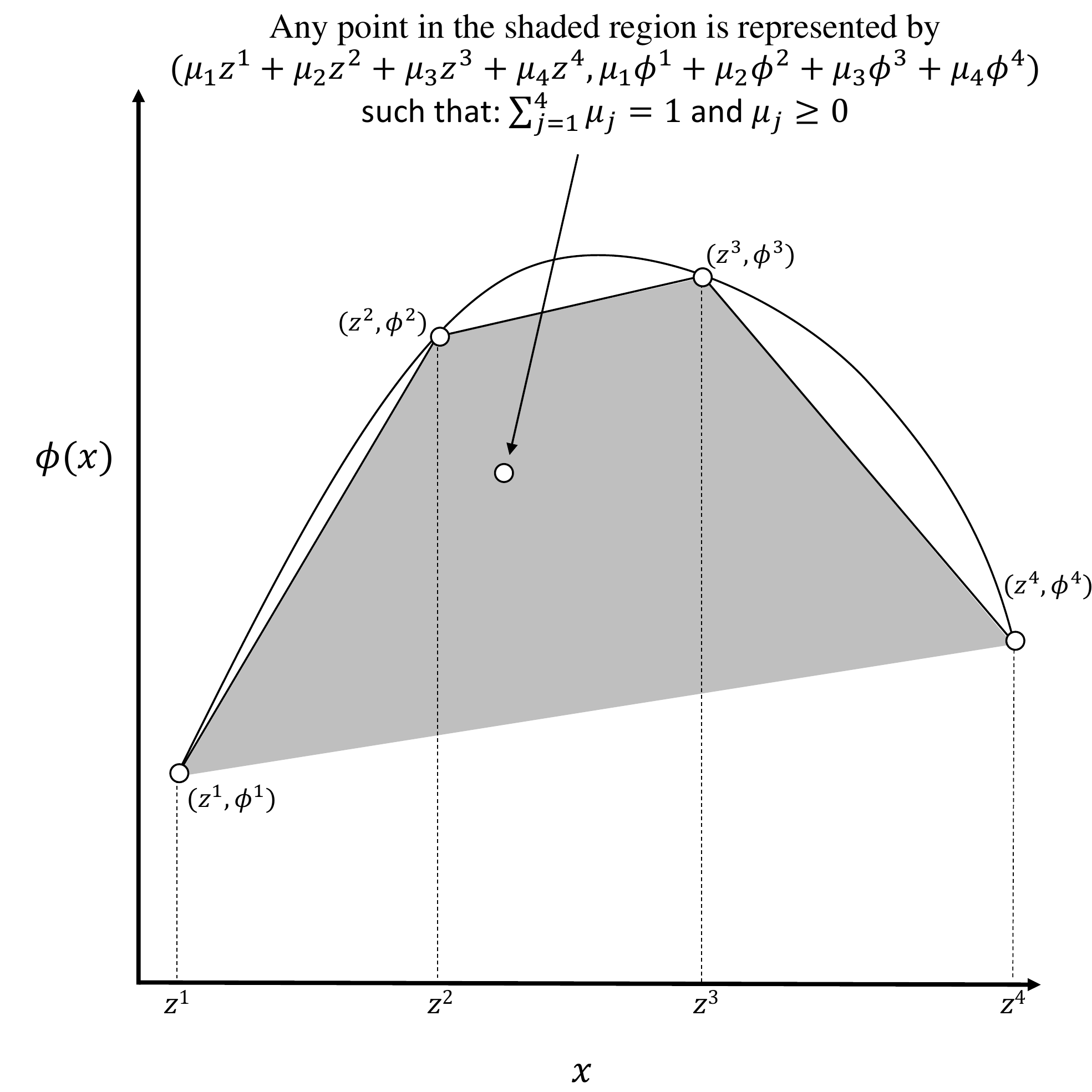}
			\caption{Linear combination of the corner points representing the entire shaded region.}
			\label{fig:expl1}
		\end{minipage}\hfill
		\begin{minipage}{.49\textwidth}
			\centering
			\includegraphics[width=1.0\linewidth]{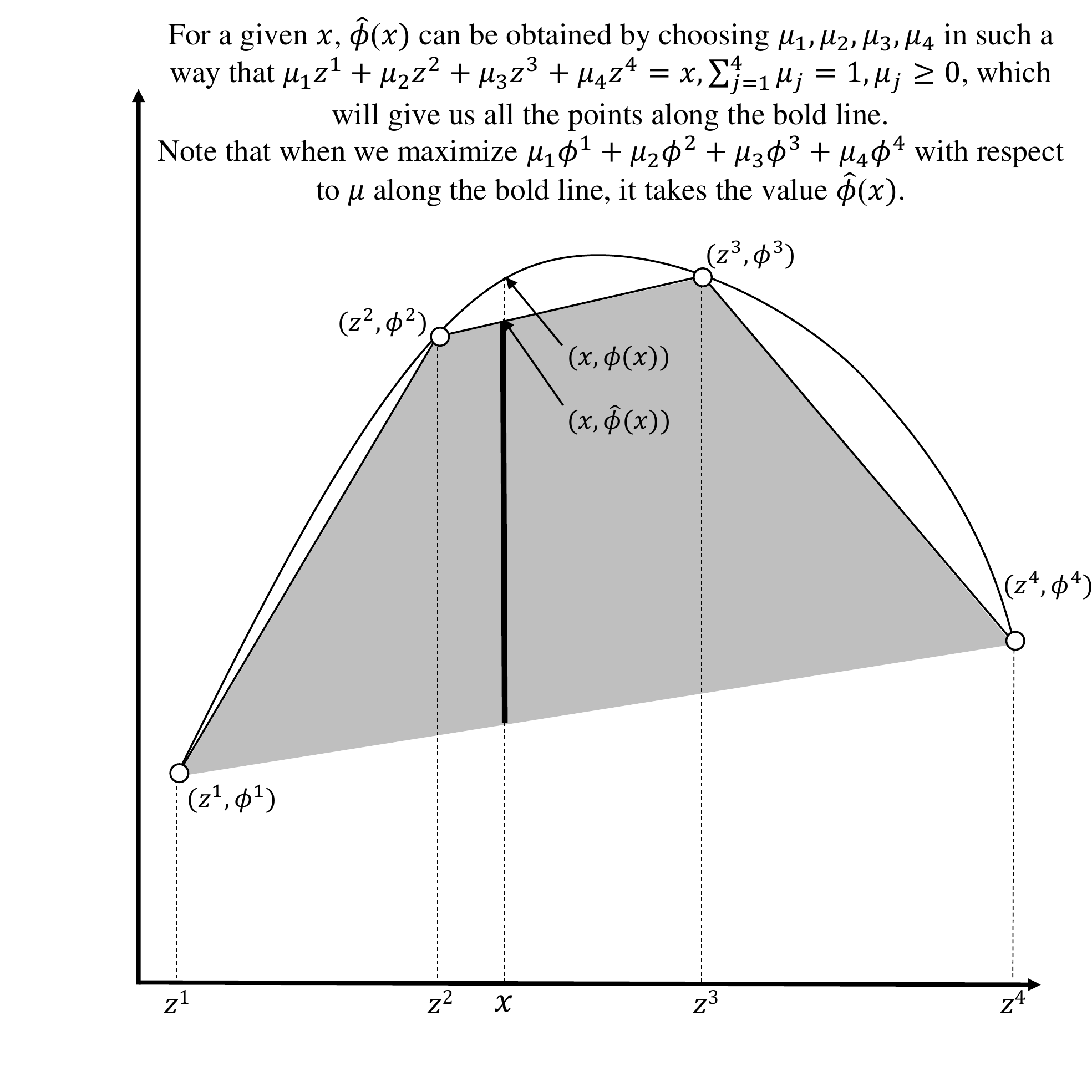}
			\caption{For a given $x$, maximization along the bold line leads to the value of $\hat{\phi}(x)$.}
			\label{fig:expl2}
		\end{minipage}
	\end{figure}

\begin{figure}
		\centering
			\includegraphics[width=.6\linewidth]{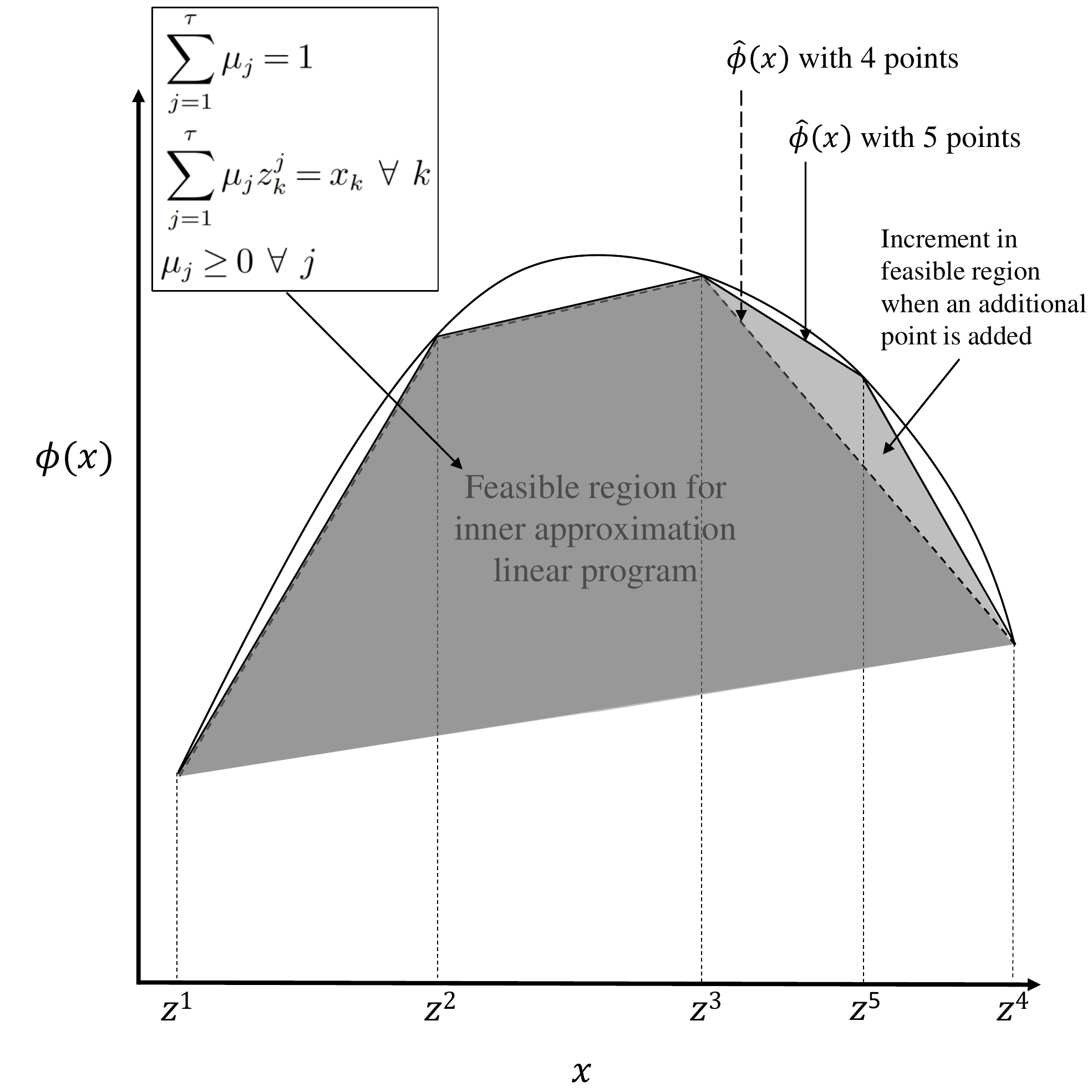}
			\caption{Inner-approximation of $\phi(x)$ with a set of points ($\tau=4$ and $\tau=5$).}
			\label{fig:approximation4points}
\end{figure}
The above approximation converts the concave minimization problem \eqref{eq:objOrig}-\eqref{eq:consConvexOrig} into the following lower bound program, as $\hat{\phi}(x)$ is the inner piecewise linear approximation of $\phi(x)$.
\begin{align}
\min_{x} & f(x) + \hat{\phi}(x) \label{eq:objRel}\\
		\st & g_i(x) \leq 0, \qquad\qquad i=1,\dots,I \\ 
	& x^l_k \le x_k \le x^u_k, \qquad k = 1,\ldots,n \label{eq:consRel}
\end{align}
\begin{theorem}\label{th:lowerBound}
The value of the optimal solution of the formulation \eqref{eq:objRel}-\eqref{eq:consRel} provides a lower bound for the formulation \eqref{eq:objOrig}-\eqref{eq:consConvexOrig}.
\end{theorem}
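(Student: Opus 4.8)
The plan is to show that $\hat{\phi}(x) \le \phi(x)$ for every feasible $x$, and then conclude that the optimal value of \eqref{eq:objRel}--\eqref{eq:consRel} cannot exceed that of \eqref{eq:objOrig}--\eqref{eq:consConvexOrig}. First I would fix any $x$ that is feasible for \eqref{eq:objOrig}--\eqref{eq:consConvexOrig} and observe that the constraints defining the inner-approximation linear program in \eqref{eq:innerApprox} do not involve $g_i$ or the box constraints at all, so the same $x$ is automatically feasible for \eqref{eq:objRel}--\eqref{eq:consRel}; thus the two problems share a common feasible set in $x$. The crux is the pointwise inequality: for any $x$ and any $\mu$ feasible in \eqref{eq:innerApprox}, the point $x = \sum_j \mu_j z^j$ is a convex combination of the sample points $z^1,\dots,z^\tau$, and since $\phi$ is concave, Jensen's inequality gives $\phi(x) = \phi\bigl(\sum_j \mu_j z^j\bigr) \ge \sum_j \mu_j \phi(z^j)$. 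Taking the maximum over all feasible $\mu$ on the right-hand side yields exactly $\phi(x) \ge \hat{\phi}(x)$ (and when no feasible $\mu$ exists, $\hat\phi(x) = -\infty$ by the usual convention, so the inequality is vacuous).

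Once the pointwise bound $\hat\phi(x) \le \phi(x)$ is in hand, the theorem follows by a routine monotonicity argument: let $x^\ast$ be an optimal solution of \eqref{eq:objOrig}--\eqref{eq:consConvexOrig}. Then $x^\ast$ is feasible for \eqref{eq:objRel}--\eqref{eq:consRel}, so the optimal value of the latter is at most $f(x^\ast) + \hat\phi(x^\ast) \le f(x^\ast) + \phi(x^\ast)$, which is precisely the optimal value of \eqref{eq:objOrig}--\eqref{eq:consConvexOrig}. Hence \eqref{eq:objRel}--\eqref{eq:consRel} gives a lower bound, as claimed. The same reasoning goes through verbatim when $x$ is constrained to be integer, since adding integrality to both formulations does not affect the pointwise comparison of objectives.

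I expect the only delicate point to be bookkeeping about the domain of $\hat\phi$: one should note that $\hat\phi(x)$ is finite exactly when $x$ lies in the convex hull of $S_c$, so to make the lower-bound program meaningful one implicitly assumes the sample set $S_c$ is chosen so that the feasible region is contained in (or at least intersects) $\mathrm{conv}(S_c)$; otherwise the argument still holds but the bound may be $-\infty$ and uninformative. Apart from this, the proof is essentially an application of concavity (Jensen) plus the observation that enlarging the admissible objective downward can only lower the optimal value. No strong regularity or differentiability of $\phi$ is needed — only concavity, which is already assumed.
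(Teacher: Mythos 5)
Your proposal is correct and follows essentially the same route as the paper: establish the pointwise bound $\hat\phi(x)\le\phi(x)$ and conclude that the relaxed objective minorizes the original over the common feasible set. You are in fact more complete than the paper's proof, which simply asserts the pointwise inequality from the inner-approximation property, whereas you justify it via Jensen's inequality and spell out the final passage from pointwise domination to a bound on optimal values.
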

\begin{proof}
Given that $\hat{\phi}(x)$ is a piecewise inner-approximation of $\phi(x)$, the function $\hat{\phi}(x)$ always bounds $\phi(x)$ from below. Therefore, at any given $x$, $\hat{\phi}(x)$ will always take a smaller value than $\phi(x)$. This implies the following:
$$
f(x) + \hat{\phi}(x) \le f(x) + \phi(x)
$$
\end{proof}
\noindent Formulation \eqref{eq:objRel}-\eqref{eq:consRel} is a bilevel program, which can be written as follows:
\begin{align}
	\min_{x,\zeta} & f(x) + \zeta \\
	\st & \mu \in \argmax_{\mu} \left\{ \sum_{j=1}^{\tau} \mu_j \phi(z^{j}): \sum_{j=1}^{\tau} \mu_j=1, \sum_{j=1}^{\tau} \mu_j z^{j}_k=x_k, k=1,\ldots,n, \mu_j\ge0, j=1,\ldots,\tau \right\} \label{eq:bilevelproblem}\\
	& \sum_{j=1}^{\tau} \mu_j \phi(z^{j}) \le \zeta\\
 & g_i(x)\leq 0, \quad i=1,\dots,I\\
 & x^l_k \le x_k \le x^u_k, \quad k=1,\ldots,n
\end{align}

A bilevel problem, where the lower level is a linear program, is often solved by replacing the lower level with its KKT conditions. Substituting the KKT conditions for the lower level program using $\alpha$, $\beta_k$ and $\gamma_j$ as the Lagrange multipliers for the constraints $\sum_{j=1}^{\tau} \mu_{j}=1$, $\sum_{j=1}^{\tau} \mu_{j} z^{j}_k=x_k$ and $\mu_{j} \geq 0$, respectively, the above formulation reduces to the following.
\begin{align}
\textbf{Mod-$S_c$} \quad \quad \quad \quad
\min_{x,\alpha,\beta,\gamma,\zeta} & \; f(x) + \zeta \label{eq:relaxedFormulationObj}\\
\st & \\
 & g_i(x)\leq 0 && i=1,\dots,I \label{eq:relaxedFormulationCons1} \\
	& \sum_{j=1}^{\tau} \mu_{j} \phi(z^{j}) \le \zeta && \label{eq:relaxedFormulationCons2} \\
	& \sum_{j=1}^{\tau} \mu_{j}=1 && \label{eq:relaxedFormulationCons3} \\
	& \sum_{j=1}^{\tau} \mu_{j} z^{j}_k=x_k && k=1,\dots,n \label{eq:relaxedFormulationCons4} \\
	& \phi(z^{j}) - \alpha - \sum_{k=1}^{n} \beta_k z_j^k + \gamma_{j} = 0 && j=1,\ldots,\tau \label{eq:relaxedFormulationCons5}\\
	& \mu_{j} \gamma_{j} = 0 && j=1,\ldots \tau \label{eq:relaxedFormulationCons6} \\
	& \mu_{j} \geq 0 && j=1,\ldots \tau \label{eq:relaxedFormulationCons7} \\
	& \gamma_{j} \geq 0 && j=1,\ldots \tau \label{eq:relaxedFormulationCons8}\\
	& x^l_k \le x_k \le x^u_k && k = 1,\ldots,n \label{eq:relaxedFormulationLastCons}
\end{align}
Note that the above program contains product terms in the complementary slackness conditions (\eqref{eq:relaxedFormulationCons6}), which can be linearized using binary variables ($u$) and BigM ($M_1$ and $M_2$) as follows:
\begin{align}
 & \gamma_{j} \leq M_1 u_{j}, \quad j=1,\ldots \tau \label{eq:lin1}\\
& \mu_{j} \leq M_2 (1-u_{j}), \quad j=1,\ldots,\tau \label{eq:lin2}\\
& u_{j} \in \{0,1\}, \quad j=1,\ldots,\tau \label{eq:lin3}
\end{align}
Theorem~\ref{th:bigM} provides tight values for $M_1$ and $M_2$.
\begin{theorem}\label{th:lipschitz}
If $\phi(x)$ is Lipschitz continuous with Lipschitz constant K, then $\hat{\phi}(x|S_{c}): x \in \text{conv } S_{c}$ is also Lipschitz continuous with the maximum possible value of Lipschitz constant as K. 
\end{theorem}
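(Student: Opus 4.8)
The plan is to use that $\hat\phi(\cdot\,|S_c)$ is a concave, piecewise-linear function on the polytope $\text{conv}\,S_c$, so that its Lipschitz modulus equals the largest gradient norm among its affine pieces, and then to show that each such gradient has norm at most $K$. Concretely, by LP duality applied to \eqref{eq:innerApprox}, for $x\in\text{conv}\,S_c$
\[
\hat\phi(x)=\min_{\alpha,\beta}\Big\{\alpha+\textstyle\sum_{k}\beta_k x_k \ :\ \alpha+\textstyle\sum_k\beta_k z^{j}_k \ge \phi(z^{j}),\ j=1,\dots,\tau\Big\},
\]
a minimum of finitely many affine functions of $x$; on the piece active at $x$ its gradient is the optimal dual vector $\beta^{\ast}$ associated with the constraints $\sum_j\mu_j z^{j}_k=x_k$. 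Given any $x,y\in\text{conv}\,S_c$ and an optimal dual pair $(\alpha^{\ast},\beta^{\ast})$ at $y$, feasibility of $(\alpha^{\ast},\beta^{\ast})$ (the dual feasible set does not depend on the query point) gives $\hat\phi(y)=\alpha^{\ast}+\beta^{\ast}\!\cdot y$ and $\hat\phi(x)\le\alpha^{\ast}+\beta^{\ast}\!\cdot x$, hence $\hat\phi(x)-\hat\phi(y)\le\beta^{\ast}\!\cdot(x-y)\le\|\beta^{\ast}\|\,\|x-y\|$; exchanging $x$ and $y$ gives the matching inequality. So the whole statement reduces to the single claim that every optimal dual slope $\beta^{\ast}$ — equivalently, the gradient of every affine piece of $\hat\phi$ — satisfies $\|\beta^{\ast}\|\le K$.

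To establish that claim I would bring in both hypotheses on $\phi$. First, since $\hat\phi\le\phi$ on $\text{conv}\,S_c$ (Jensen's inequality for the concave $\phi$) and $\hat\phi(z^{j})=\phi(z^{j})$ at every sample point, the affine piece active at $y$ agrees with $\phi$ at each sample point $z^{j}$ lying on the supporting face $F$ of the upper boundary of $\text{conv}\{(z^{j},\phi(z^{j}))\}$ that passes through $(y,\hat\phi(y))$; hence its gradient $\beta^{\ast}$ obeys the interpolation identities $\beta^{\ast}\!\cdot(z^{j}-z^{l})=\phi(z^{j})-\phi(z^{l})$ for those nodes, so $|\beta^{\ast}\!\cdot(z^{j}-z^{l})|\le K\|z^{j}-z^{l}\|$. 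Second, I would convert these node-to-node bounds into a bound on $\|\beta^{\ast}\|$ using concavity: pick a node $z^{j_0}\in F$ and move from it in the direction $\beta^{\ast}/\|\beta^{\ast}\|$; along that direction the active affine piece, hence $\hat\phi$, grows at rate $\|\beta^{\ast}\|$, while $\hat\phi\le\phi$ together with $\hat\phi(z^{j_0})=\phi(z^{j_0})$ forces $\phi$ to grow at least as fast, contradicting the $K$-Lipschitz property of $\phi$ unless $\|\beta^{\ast}\|\le K$. (Alternatively, and perhaps more cleanly: at an interior point $y$, $\phi$ admits a supergradient $g$ with $\|g\|\le K$ because $\phi$ is concave and $K$-Lipschitz, and one argues $\|\beta^{\ast}\|\le\|g\|$ since a hyperplane supporting the lower inner-approximation at $y$ cannot be steeper than one supporting $\phi$ at $y$.)

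The step I expect to be the main obstacle is precisely this last passage — turning the node-wise increments into $\|\beta^{\ast}\|\le K$. The gradient direction $\beta^{\ast}/\|\beta^{\ast}\|$ need not point from any node $z^{j_0}\in F$ into the projected face, because a face of the upper boundary can be tilted relative to its own gradient; in dimension $n\ge 2$ the pairwise Lipschitz inequalities among the interpolation nodes by themselves do not force $\|\beta^{\ast}\|\le K$, so one genuinely has to use concavity of $\phi$ (or, equivalently, decompose $x-y$ into displacements along the edges of the subdivision of $\text{conv}\,S_c$ induced by $S_c$ and bound each edge slope by $K$). For $n=1$ this difficulty evaporates: each affine piece is simply the chord of $\phi$ between two sample points, whose slope is a difference quotient of $\phi$ and is therefore at most $K$ in absolute value.
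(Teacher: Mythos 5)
Your opening reduction is correct and is essentially the paper's own route: $\hat\phi(\cdot\,|S_c)$ is concave and piecewise affine on $\text{conv}\,S_c$, the dual/support-function argument cleanly gives $\hat\phi(x)-\hat\phi(y)\le\beta^{\ast}\!\cdot(x-y)$, and the whole theorem reduces to showing that the slope $\beta$ of every affine piece satisfies $\|\beta\|\le K$. But you never establish that bound: you explicitly flag it as ``the main obstacle'' and offer two sketches, neither of which closes it. The first (move from a node in the direction $\beta/\|\beta\|$) has nowhere to start, because, as you yourself observe, that direction need not be a feasible direction into the cell at any of its nodes; the second (a supporting hyperplane of the inner approximation ``cannot be steeper'' than a supergradient of $\phi$) is exactly the assertion that needs proof. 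So, as written, the proposal is an incomplete proof with the decisive inequality left open.

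Your suspicion about that step is in fact well founded: with the Euclidean norm the inequality is false for $n\ge 2$, so the gap cannot be closed. Take $S_c=\{(0,0),(1,0),(0,1)\}$ and $\phi(x)=-K\max(x_1,x_2)$ (subtract $\epsilon(x_1^2+x_2^2-x_1-x_2)$ if strict concavity is wanted; this leaves the vertex values unchanged and perturbs the Lipschitz constant only by $O(\epsilon)$). This $\phi$ is concave and $K$-Lipschitz, since every supergradient is a convex combination of $(-K,0)$ and $(0,-K)$ and hence has norm at most $K$. Yet $\hat\phi(x|S_c)=-K(x_1+x_2)$ on the triangle, whose gradient $(-K,-K)$ has norm $K\sqrt{2}$; concretely, $|\hat\phi(0,0)-\hat\phi(\tfrac12,\tfrac12)|=K=\sqrt{2}\,K\,\|(0,0)-(\tfrac12,\tfrac12)\|$, so the best Lipschitz constant of $\hat\phi$ is $\sqrt{2}K>K$. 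Note that the inner-approximation property is respected here, $-K(x_1+x_2)\le -K\max(x_1,x_2)$ on the triangle. The same example shows that the paper's own proof contains the identical gap: its key assertion $\nabla\hat\phi(x)\in\partial\phi(x)$ fails, since $(-K,-K)$ is never a supergradient of $-K\max(x_1,x_2)$. The statement (and both proofs) are fine for $n=1$, where each piece is a chord of $\phi$ and its slope is a difference quotient; for $n\ge 2$ the constant $K$ must be enlarged by a factor depending on the dimension and the geometry of the cells, which also affects the value of $M_1$ claimed in Theorem~\ref{th:bigM}.
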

\begin{proof}
From the Lipschitz condition $|\phi(x_1)-\phi(x_2)|\le K||x_1-x_2||$ and the concavity of $\phi(x): x \in \text{conv } S_c$, we can say that $||\omega|| \le K \; \forall \; \omega \in \partial \phi(x)$, where $\partial \phi(x)$ represents the subgradient for $\phi(x)$. The function $\hat{\phi}(x|S_c): x \in \text{conv } S_c$ is a concave polyhedral function, i.e. it consists of piecewise hyperplanes. Consider bounded polyhedra $X_j, j = 1,\ldots,s$ on which the hyperplanes are defined, such that $\nabla \hat{\phi}(x)$ is constant in the interior of $X_j$. Note that $\hat{\phi}(x|S_c) = \phi(x)$ on the vertices, otherwise $\hat{\phi}(x|S_c) \le \phi(x)$. From the property of concavity, it is clear that $\nabla \hat\phi(x) \in \partial \phi(x): x \in X_j$. This implies that $||\nabla \hat\phi(x)|| \le K \; \forall \; x \in X_j$, which can be generalized for all hyperplanes.
\end{proof}

\begin{theorem}\label{th:bigM}
$M_1=\psi^{max} = K ||\Delta z||^{max} + \phi^{\text{max}}-\phi^{\text{min}}$ and $M_2=1$ are valid BigM values.
\end{theorem}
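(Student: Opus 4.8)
\medskip
\noindent\textbf{Proof strategy.} The plan is to first observe that, for any $u_j\in\{0,1\}$, constraints \eqref{eq:lin1}--\eqref{eq:lin3} together with $\mu_j,\gamma_j\ge 0$ already force $\mu_j\gamma_j=0$; hence, to certify that $M_1$ and $M_2$ are valid it suffices to exhibit an optimal solution of \textbf{Mod-$S_c$} (in which \eqref{eq:relaxedFormulationCons6} holds) that also satisfies $\gamma_j\le M_1$ and $\mu_j\le M_2$ for every $j$. I would bound the two multipliers separately: the bound behind $M_2$ is immediate, while the one behind $M_1$ needs an argument. For $M_2$: any feasible point of \textbf{Mod-$S_c$} has $\mu_j\ge 0$ by \eqref{eq:relaxedFormulationCons7} and $\sum_{j=1}^{\tau}\mu_j=1$ by \eqref{eq:relaxedFormulationCons3}, so $\mu_j\le 1$; thus $M_2=1$ is valid.

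For $M_1$, note that constraints \eqref{eq:relaxedFormulationCons3}--\eqref{eq:relaxedFormulationCons8} are precisely the KKT system of the inner-approximation program \eqref{eq:innerApprox} at the point $x$. Hence at any feasible point of \textbf{Mod-$S_c$}: $\mu$ is optimal for \eqref{eq:innerApprox}; $x=\sum_j\mu_j z^{j}\in\text{conv}\,S_c$ by \eqref{eq:relaxedFormulationCons4}; $(\alpha,\beta)$ is a corresponding optimal dual solution; and, by \eqref{eq:relaxedFormulationCons5}, $\gamma_j$ is the slack in the $j$-th dual inequality $\alpha+\sum_k\beta_k z^{j}_k\ge\phi(z^{j})$. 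The key step is that, without loss of optimality, $\beta$ may be taken to be the gradient of an affine piece of $\hat{\phi}(\cdot\,|S_c)$ that is active at $x$: the associated affine map $\ell(y)=\alpha+\sum_k\beta_k y_k$ then coincides with $\hat{\phi}$ at $x$, dominates the concave function $\hat{\phi}$ on $\text{conv}\,S_c$ by the supergradient inequality, and satisfies $\ell(z^{j})\ge\hat{\phi}(z^{j})=\phi(z^{j})$ at every sample point (at $x=z^{j}$ the solution of \eqref{eq:innerApprox} that puts all weight on $z^{j}$ is optimal, by concavity of $\phi$); so $\ell$ is dual-feasible, and being of objective value $\hat{\phi}(x)$ it is dual-optimal. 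Replacing the multipliers of any optimal solution of \textbf{Mod-$S_c$} by this one leaves $x$, $\zeta$, $\mu$ and the objective $f(x)+\zeta$ unchanged and keeps every constraint satisfied, so it suffices to bound $\gamma_j$ for this $\beta$, for which Theorem~\ref{th:lipschitz} gives $\|\beta\|\le K$.

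The remaining computation is short. Pick $j_0$ with $\mu_{j_0}>0$ (possible since $\sum_j\mu_j=1$); complementarity \eqref{eq:relaxedFormulationCons6} forces $\gamma_{j_0}=0$, whence \eqref{eq:relaxedFormulationCons5} gives $\alpha+\sum_k\beta_k z^{j_0}_k=\phi(z^{j_0})$. For an arbitrary $j$, \eqref{eq:relaxedFormulationCons5} then yields
\begin{align*}
\gamma_j &=\alpha+\sum_k\beta_k z^{j}_k-\phi(z^{j})=\bigl(\phi(z^{j_0})-\phi(z^{j})\bigr)+\sum_k\beta_k\bigl(z^{j}_k-z^{j_0}_k\bigr)\\
&\le \bigl(\phi^{\text{max}}-\phi^{\text{min}}\bigr)+\|\beta\|\,\|z^{j}-z^{j_0}\| \le K\,\|\Delta z\|^{max}+\phi^{\text{max}}-\phi^{\text{min}}=\psi^{max},
\end{align*}
using Cauchy--Schwarz, $\|\beta\|\le K$, $\|z^{j}-z^{j_0}\|\le\|\Delta z\|^{max}$, and that $\phi^{\text{max}},\phi^{\text{min}}$ bound the range of $\phi$. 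So $\gamma_j\le M_1=\psi^{max}$ at this optimal solution, and combined with the $M_2$ bound this shows the linearization \eqref{eq:lin1}--\eqref{eq:lin3} is exact, proving both stated values valid.

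I expect the main obstacle to be the second paragraph. Equation \eqref{eq:relaxedFormulationCons5} by itself does not bound $\gamma_j$, because on the boundary of $\text{conv}\,S_c$ the dual of \eqref{eq:innerApprox} may have an unbounded optimal face: shifting $\beta$ along an outward normal direction keeps $(\alpha,\beta)$ dual-optimal while inflating $\gamma_j$ at non-support sample points without bound. The remedy is to single out one particular optimal dual solution --- a facet gradient of $\hat{\phi}$, whose norm is controlled by $K$ via Theorem~\ref{th:lipschitz} --- and to check that it is genuinely dual-feasible, which is exactly where the identity $\hat{\phi}(z^{j})=\phi(z^{j})$ (itself a consequence of the concavity of $\phi$) is required.
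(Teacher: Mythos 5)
Your proof is correct and follows essentially the same route as the paper's: bound $\mu_j\le 1$ from $\sum_j\mu_j=1$ and $\mu_j\ge 0$, interpret $\gamma_j$ as the dual slack of the inner-approximation LP, use complementarity to find one index with $\gamma_{j_0}=0$, eliminate $\alpha$, and bound $\sum_k\beta_k(z^j_k-z^{j_0}_k)$ via Theorem~\ref{th:lipschitz}. The only difference is that you explicitly justify two steps the paper asserts without proof (that some $\gamma_{j_0}$ vanishes, and that $\beta$ may be taken to be a facet gradient of $\hat{\phi}$ with $\|\beta\|\le K$ even though the dual optimal face can be unbounded), which tightens rather than changes the argument.
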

\begin{proof}
From constraints \eqref{eq:relaxedFormulationCons3} and \eqref{eq:relaxedFormulationCons7}, we observe that the maximum value that $\mu_j$ can take is $1$. Hence, $M_2=1$ is acceptable. Next, let us look at the proof for $M_1$.

The value of $M_1$ can be set as a value larger than or equal to the maximum possible value that $\gamma_j$ may take. The dual of the lower lever problem in \eqref{eq:bilevelproblem} can be written as
\begin{align}
	\min_{\alpha, \beta, \gamma} & \sum_{k=1}^{n} \beta_k x_k + \alpha \label{eq:objdual}\\
	\st & \sum_{k=1}^{n} \beta_k z_k^j  + \alpha - \gamma_j  = \phi(z^j), && j=1,\dots,\tau \label{eq:consdual1}\\
	& \gamma_j \geq 0, && j=1,\dots,\tau \label{eq:consdual2}
\end{align}
where $\alpha$ is the dual variable for the constraint $\sum_{j=1}^{\tau}\mu_j=1$, $\beta_k$ is the dual for the constraint set $\sum_{j=1}^{\tau} \mu_j z^{j}_k=x_k$, and $\gamma_j$ is the dual for $\mu_j \ge 0$. Note that $\gamma_j$ being a dual for a non-negativity constraint in the primal is essentially a slack variable in the dual formulation. At the optimum, $\beta$ represents the normal to the inner-approximation plane on which the point $(x,\hat\phi(x))$ lies.
At the optimum point of the above dual, at least one slack variable will be equal to zero. Without loss of generality, let us assume that $\gamma_1=0$.
\begin{align}
 \hspace{-30mm} \implies \sum_{k=1}^{n} \beta_k z_k^1  + \alpha = \phi(z^1) \\
 \hspace{-30mm} \implies  \alpha = \phi(z^1)- \sum_{k=1}^{n} \beta_k z_k^1 \label{eq:dualeq1}
\end{align}
By substituting $\alpha$ from \eqref{eq:dualeq1}, we arrive at the following equation for $j=2$
\begin{align}
& \sum_{k=1}^{n}( \beta_k z_k^2 - \beta_k z_k^1) - \gamma_2 = \phi(z^2)-\phi(z^1) \label{eq:dualeq2}\\
\implies & \gamma_2 = \sum_{k=1}^{n}( \beta_k z_k^2 - \beta_k z_k^1) + \phi(z^1)-\phi(z^2) \label{eq:dualeq4}
\end{align}
Given that $\beta$ is a normal to the inner-approximation plane, from Theorem \ref{th:lipschitz}, $\sum_{k=1}^{n}(\beta_k z_k^2 - \beta_k z_k^1) \le K ||z^2 - z^1||$, which implies that
$\gamma_2^{max} = K ||\Delta z||^{max} + \phi^{\text{max}}-\phi^{\text{min}}$.
$K$ is the Lipschitz constant for $\phi$. $\phi^{\text{max}}$ and $\phi^{\text{min}}$ are the maximum and minimum values of $\phi$ in its domain, and $||\Delta z||^{max}$ is the maximum distance between any two values of $z$. Since $z = \{z_k: x^l_k \le z_k \le x^u_k, k = 1,\ldots,n\}$, $||\Delta z||^{max} = \sqrt{\sum_{k=1}^{n} (x^u_k-x^l_k)^2}$.
\end{proof}


Appropriate values for $M_1$ and $M_2$ are critical for the performance of the method, and therefore the above theorem plays an important role in making the proposed approach competitive. It has been recently shown \citep{kleinert2020there} that identifying a BigM in similar contexts is an NP hard problem, but the structure of the bilevel program in our case allows the identification of tight values for these otherwise large numbers. After linearization of the complimentary slackness conditions, \eqref{eq:relaxedFormulationObj}-\eqref{eq:relaxedFormulationCons5}, \eqref{eq:relaxedFormulationCons7}-\eqref{eq:lin3} is a convex mixed integer program (MIP), which on solving to optimality generates a lower bound for the original program \eqref{eq:objOrig}-\eqref{eq:consConvexOrig}. Solving \eqref{eq:relaxedFormulationObj}-\eqref{eq:relaxedFormulationCons5}, \eqref{eq:relaxedFormulationCons7}-\eqref{eq:lin3} leads to $z^{\tau+1}$ as an optimal point, which is also a feasible point for the original problem \eqref{eq:objOrig}-\eqref{eq:consConvexOrig}. Therefore,substituting $x$ with $z^{\tau+1}$ in \eqref{eq:objOrig} provides an upper bound for the original problem. The optimal point $z^{\tau+1}$ to the convex MIP is used to create a new set $S_{c+1} = S_c \cup z^{\tau+1}$ corresponding to which a new convex MIP is formulated. The new convex MIP formulated with an additional point is expected to provide improved lower and upper bounds in the next iteration of the algorithm. This algorithm is referred to as the Inner-Approximation (IA) algorithm in the rest of the paper. A pseudo-code of IA algorithm is provided in Algorithm 1.
\begin{algorithm} 
	\caption{IA Algorithm for solving concave problem}\label{alg:ConcaveProblem}
	\begin{algorithmic}[1]
		\State \textit{Begin}
		\State $UB_\mathcal{A} \gets +\infty, LB_\mathcal{A} \gets -\infty, c \gets \textit{1}$
		\State $ \text{Choose an initial set of $\tau$ points } S_c = \{ z^{1}, z^{2}, \ldots, z^{\tau} \} $
		\While {(($UB_\mathcal{A}-LB_\mathcal{A})/LB_\mathcal{A} > \epsilon \; ) $ begin} 
		\State Solve Mod-$S_c$ (\eqref{eq:relaxedFormulationObj}-\eqref{eq:relaxedFormulationLastCons}) with a convex MIP solver after linearizing \eqref{eq:relaxedFormulationCons6}
		\State Let the optimal solution for Mod-$S_c$ be $z^{\tau+c}$
		\State $LB_\mathcal{A} \gets f(z^{\tau +c}) + \hat\phi(z^{\tau +c})$ 
		\State $ UB_\mathcal{A} \gets f(z^{\tau +c}) + \phi(z^{\tau +c}) $
		\State $ \mathcal{S}_{c+1} \gets \mathcal{S}_c \; \cup \;z^{\tau+c} $
		\State $c \gets c +1$; 
		\EndWhile
		\State \textit{End}
	\end{algorithmic}
\end{algorithm}
The algorithm starts with an initial set of points $S_1 = \{z^1, \ldots, z^\tau\}$, such that $\text{dom } \phi(x) \subseteq \text{conv } S_1$.

\subsection{The Initial Set}
In this section, we discuss the choice of the initial set $S_1 = \{z^1, \ldots, z^\tau\}$, such that $\text{dom } \phi(x) \subseteq \text{conv } S_1$. The bound constraints in \eqref{eq:objOrig}-\eqref{eq:consConvexOrig} are important so that the initial set $S_1$ may be chosen easily. One of the ways to initialize $S_1$ would be to choose the corner points of the box constraints $x^l_k \le x_k \le x^u_k, \; k = 1,\ldots,n$. Additional points may be sampled randomly between the lower and upper bounds at the start of the algorithm for a better initial approximation of $\phi(x)$, but are not necessary. However, note that for a problem with $n$ variables, choosing the corner points of the box constraints, amounts to starting the algorithm with the cardinality of $S_1$ as $2^n$. For large dimensional problems, the size of the set may be very large, and therefore the approach would be intractable. For large dimensional problem we propose an alternative technique to choose $S_1$, such that $\text{dom } \phi(x) \subseteq \text{conv } S_1$, but the number of points in $S_1$ is only $n+1$.

Without loss of generality, assume that the lower bound is 0 and the upper bound is 1, as one can always normalize the variables by replacing variables $x_k$ with $y_k (x_k^u-x_k^l) + x_k^l$ such that $0 \le y_k \le 1$. In such a case, Figure~\ref{fig:smallPoly} shows the feasible region $g_i(y) \le 0 \; \forall \; i$, enclosed in the polyhedron $0 \le y_k \le 1 \; \forall \; k$. Another polyhedron that encloses $g_i(y) \le 0 \; \forall \; i$ completely is shown in Figure~\ref{fig:largePoly}. While the polyhedron in Figure~\ref{fig:smallPoly} is smaller in terms of the area (or volume), the polyhedron in Figure~\ref{fig:largePoly} is comparatively larger. However, the number of points required to form the polyhedron in Figure~\ref{fig:smallPoly} for an $n$ dimensional problem would be $2^n$, whereas the polyhedron in Figure~\ref{fig:largePoly} will require only $n+1$ points for an $n$ dimensional problem. For the second case, in an $n$ dimensional problem the points can be chosen as follows, $(0,0, \ldots, 0), (n, 0, \ldots, 0), (0, n, \ldots, 0), \ldots, (0, 0, \dots, n)$. These points from the $y$ space can be transformed to the corresponding $x$ space by the following substitution $x_k = y_k (x_k^u-x_k^l) + x_k^l$. One may of course choose any other polyhedron that completely encloses $g_i(x) \le 0 \; \forall \; i$.

	\begin{figure}
		\centering
		\begin{minipage}{.48\textwidth}
			\centering
			\includegraphics[width=.9\linewidth]{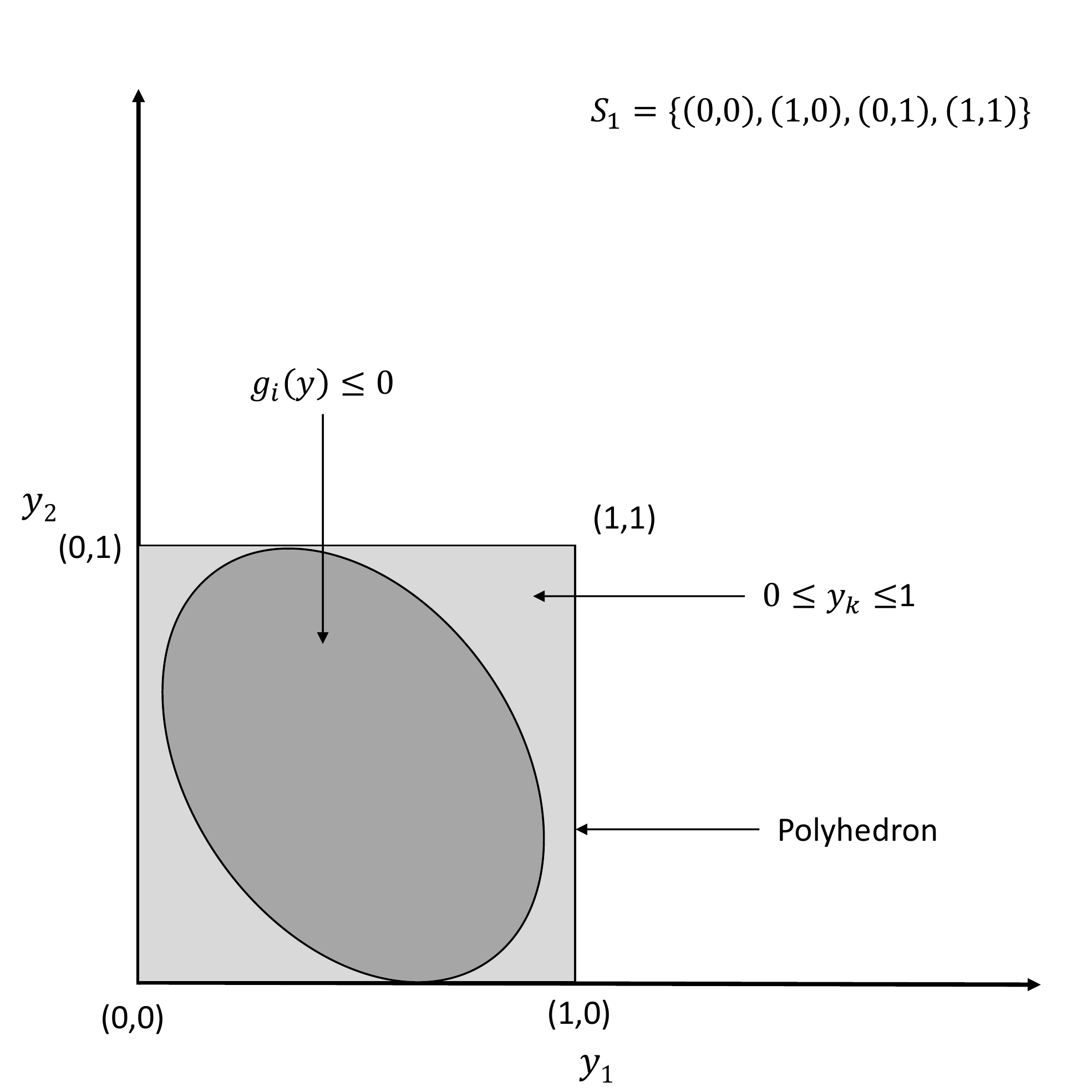}
			\caption{Smaller polyhedron with larger number of points.}
			\label{fig:smallPoly}
		\end{minipage}\hfill
		\begin{minipage}{.48\textwidth}
			\centering
			\includegraphics[width=.9\linewidth]{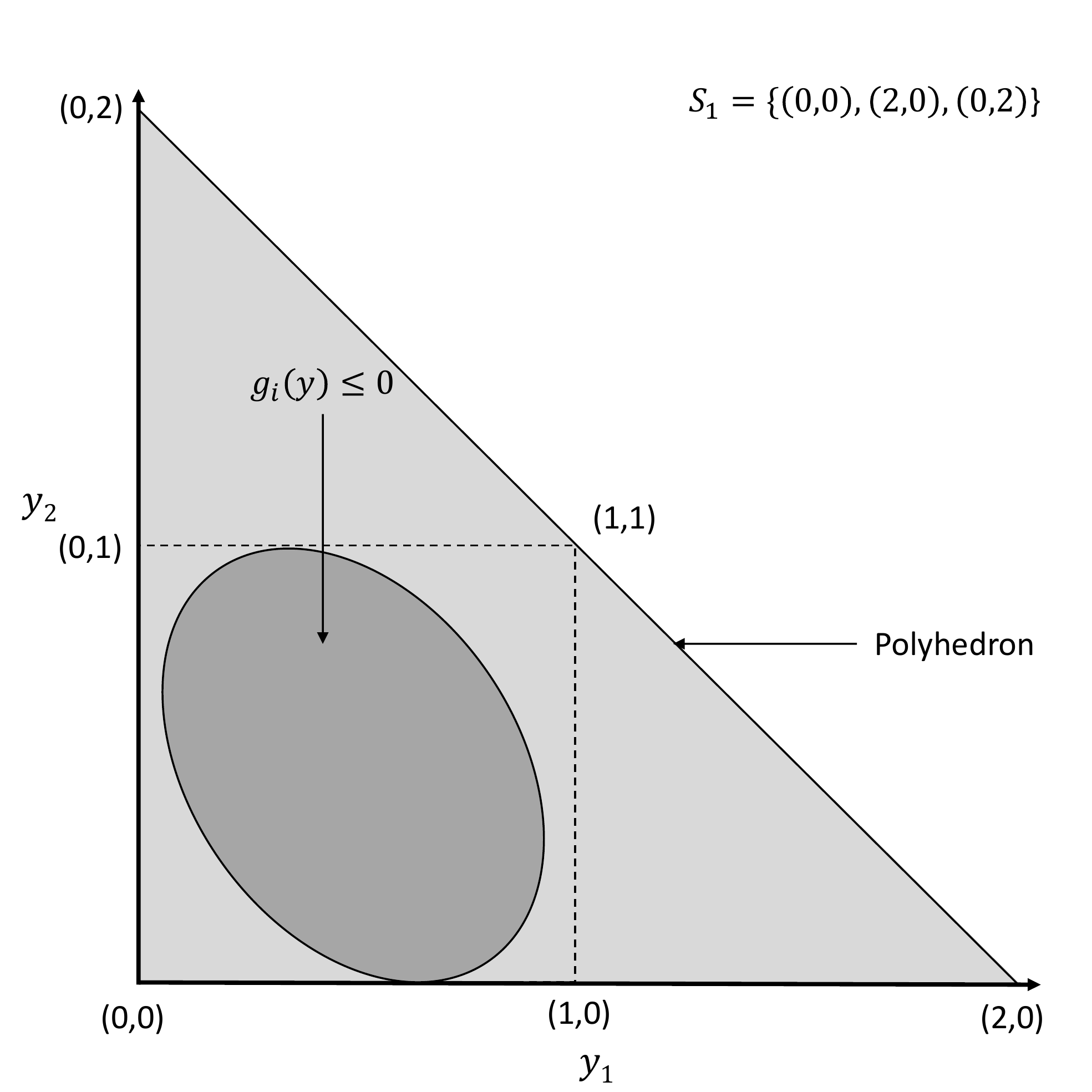}
			\caption{Larger polyhedron with smaller number of points.}
			\label{fig:largePoly}
		\end{minipage}
	\end{figure}



\subsection{Convergence Results}\label{sec:convergenceProof}
Next, we discuss the convergence results for the proposed algorithm. First we prove that if the algorithm provides the same solution to the lower bound problem (Mod-$S_c$) in two consecutive iterations, then the solution is an optimal solution to the original concave minimization problem \eqref{eq:objOrig}-\eqref{eq:consConvexOrig}.
\begin{theorem}\label{th:equal}
If two consecutive iterations $c$ and $c+1$ lead to the same solution to the lower bound problem (Mod-$S_c$), then the solution is optimal for \eqref{eq:objOrig}-\eqref{eq:consConvexOrig}.
\end{theorem}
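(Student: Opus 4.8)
The plan is to show that if iterations $c$ and $c+1$ produce the same optimal solution $z^{\tau+c}$ to the lower bound problem Mod-$S_c$, then the lower bound equals the upper bound at that point, which forces global optimality. First I would invoke the construction of the algorithm: the solution $z^{\tau+c}$ of Mod-$S_c$ is added to the sample set to form $S_{c+1} = S_c \cup \{z^{\tau+c}\}$. The key observation is that once $z^{\tau+c}$ is a sample point, the inner approximation $\hat\phi(\cdot\,|\,S_{c+1})$ is \emph{exact} at $z^{\tau+c}$, i.e. $\hat\phi(z^{\tau+c}|S_{c+1}) = \phi(z^{\tau+c})$, since a sample point can be represented as the trivial convex combination $\mu$ placing weight $1$ on itself, and by Theorem~\ref{th:lowerBound}-type reasoning $\hat\phi$ never exceeds $\phi$.

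Next I would use the hypothesis that iteration $c+1$ returns the same point $z^{\tau+c}$ as optimal for Mod-$S_{c+1}$. Its objective value is then $f(z^{\tau+c}) + \hat\phi(z^{\tau+c}|S_{c+1}) = f(z^{\tau+c}) + \phi(z^{\tau+c})$, using the exactness just established. On one hand this quantity is a valid lower bound for the original problem \eqref{eq:objOrig}-\eqref{eq:consConvexOrig} by Theorem~\ref{th:lowerBound} (applied with the sample set $S_{c+1}$). On the other hand, $z^{\tau+c}$ is feasible for the original problem (it satisfies \eqref{eq:relaxedFormulationCons1}, \eqref{eq:relaxedFormulationCons4}, and \eqref{eq:relaxedFormulationLastCons}, which are exactly the original constraints), so $f(z^{\tau+c}) + \phi(z^{\tau+c})$ is also an upper bound on the optimal value. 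A value that is simultaneously a lower and an upper bound on the optimum must equal the optimum, and the feasible point attaining it is a global minimizer.

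The one place that needs care — and what I expect to be the main obstacle — is justifying that the \emph{same} point being optimal in two consecutive iterations really does give $\hat\phi(z^{\tau+c}|S_{c+1}) = \phi(z^{\tau+c})$ rather than merely $\hat\phi(z^{\tau+c}|S_c)$, which could still be a strict underestimate. This hinges on the monotonicity of the approximation noted in the discussion of Figure~\ref{fig:approximation4points}: $\mathrm{conv}\,S_{c+1} \supseteq \mathrm{conv}\,S_c$ and the feasible region of the inner LP \eqref{eq:innerApprox} only grows, so $\hat\phi(\cdot\,|\,S_{c+1}) \ge \hat\phi(\cdot\,|\,S_c)$ pointwise while still staying $\le \phi$; at the newly added vertex $z^{\tau+c}$ the inequality becomes equality. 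I would also remark that the argument does not even require uniqueness of the optimal solutions — it is enough that \emph{some} common optimizer is shared — and that this theorem is what makes the subsequent finite-convergence argument possible, since otherwise the algorithm would have to generate a genuinely new point each iteration.
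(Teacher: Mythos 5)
Your proposal is correct and follows essentially the same route as the paper's own proof: exactness of the inner approximation at the newly added sample point $z^{\tau+c}$, combined with the lower-bound property of Theorem~\ref{th:lowerBound}, forces the relaxation's optimal value to coincide with the true objective at a feasible point. Your write-up is in fact slightly more careful than the paper's, since you explicitly justify $\hat\phi(z^{\tau+c}|S_{c+1}) = \phi(z^{\tau+c})$ via the trivial convex combination and spell out the lower-bound-equals-upper-bound conclusion that the paper leaves implicit.
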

\begin{proof}
Say that $z^{\tau+c}$ is the solution at iteration $c$. Note that $S_{c+1} = S_c \cup z^{\tau+c}$, which implies that at iteration $c+1$, $\hat{\phi}(z^{\tau+c}|S_{c+1}) = \phi(z^{\tau+c})$, i.e. when a new point $z^{\tau+c}$ is added in the approximation set, the value of $\hat{\phi}$ and $\phi$ is the same at that point. From Theorem~\ref{th:lowerBound}, at iteration $c+1$, $f(z^{\tau+c+1}) + \hat{\phi}(z^{\tau+c+1}|S_{c+1}) \le f(z^{\tau+c+1}) + \phi(z^{\tau+c+1})$. Since $z^{\tau+c+1}=z^{\tau+c}$ and $\hat{\phi}(z^{\tau+c}|S_{c+1}) = \phi(z^{\tau+c})$, $f(z^{\tau+c+1}) + \hat{\phi}(z^{\tau+c+1}|S_{c+1}) \le f(z^{\tau+c+1}) + \phi(z^{\tau+c+1})$ holds with an equality implying that $z^{\tau+c}$ is the optimal solution.
\end{proof}

\begin{theorem}
When the algorithm proceeds from iteration $c$ to $c+1$, then the lower bound for \eqref{eq:objOrig}-\eqref{eq:consConvexOrig} improves if $z^{\tau+c}$ at iteration $c$ is not the optimum for \eqref{eq:objOrig}-\eqref{eq:consConvexOrig}.
\end{theorem}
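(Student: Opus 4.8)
The plan is to show that the lower bound $f(z^{\tau+c}) + \hat\phi(z^{\tau+c}|S_c)$ at iteration $c$ is strictly less than the lower bound $f(z^{\tau+c+1}) + \hat\phi(z^{\tau+c+1}|S_{c+1})$ at iteration $c+1$, under the assumption that $z^{\tau+c}$ is not optimal for the original problem \eqref{eq:objOrig}-\eqref{eq:consConvexOrig}. The argument proceeds by contradiction combined with a monotonicity observation. First I would record the monotonicity of the lower bound: since $S_{c+1} = S_c \cup z^{\tau+c}$, the feasible region of the inner-approximation LP \eqref{eq:innerApprox} at iteration $c+1$ contains that at iteration $c$ for every $x \in \text{conv } S_c$, so $\hat\phi(x|S_{c+1}) \ge \hat\phi(x|S_c)$ pointwise; consequently the optimal value of Mod-$S_{c+1}$ is at least that of Mod-$S_c$, i.e. the lower bound is non-decreasing across iterations. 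So it suffices to rule out equality.

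**Next I would suppose, for contradiction,** that the two lower bounds are equal. Let $z^{\tau+c+1}$ be the optimizer of Mod-$S_{c+1}$. Using the key identity from the proof of Theorem~\ref{th:equal} — namely $\hat\phi(z^{\tau+c}|S_{c+1}) = \phi(z^{\tau+c})$, because $z^{\tau+c}$ is now one of the sample points — together with Theorem~\ref{th:lowerBound} applied at iteration $c$, one gets
$$
f(z^{\tau+c}) + \hat\phi(z^{\tau+c}|S_c) \le f(z^{\tau+c}) + \phi(z^{\tau+c}) = f(z^{\tau+c}) + \hat\phi(z^{\tau+c}|S_{c+1}).
$$
The right-hand side is the objective value of Mod-$S_{c+1}$ at the feasible point $z^{\tau+c}$, hence is at least the optimal value of Mod-$S_{c+1}$, which by the equality assumption equals the left-hand side. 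Therefore $z^{\tau+c}$ attains the optimum of Mod-$S_{c+1}$ and the displayed inequality is in fact an equality, forcing $\hat\phi(z^{\tau+c}|S_{c+1}) = \hat\phi(z^{\tau+c}|S_c)$; but more importantly, since the optimal value of Mod-$S_{c+1}$ equals $f(z^{\tau+c}) + \phi(z^{\tau+c})$, we may invoke (the proof of) Theorem~\ref{th:equal}: the situation is exactly that of two consecutive iterations producing the same lower-bound solution, so $z^{\tau+c}$ is optimal for \eqref{eq:objOrig}-\eqref{eq:consConvexOrig}, contradicting the hypothesis.

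**The main obstacle** I anticipate is the bookkeeping around whether "the same solution in two consecutive iterations" (the hypothesis of Theorem~\ref{th:equal}) really does follow from "the same lower-bound value," since Mod-$S_{c+1}$ could in principle have an alternative optimal solution $z^{\tau+c+1} \ne z^{\tau+c}$ with the same value. I would handle this by arguing at the level of objective values rather than solutions: the chain above shows the optimal value of Mod-$S_{c+1}$ equals $f(z^{\tau+c}) + \phi(z^{\tau+c})$, which is a valid upper bound for the original problem (it is the true objective at the feasible point $z^{\tau+c}$); since the optimal value of Mod-$S_{c+1}$ is also a valid lower bound (Theorem~\ref{th:lowerBound}), the original optimum is squeezed to exactly $f(z^{\tau+c}) + \phi(z^{\tau+c})$, so $z^{\tau+c}$ is optimal — again contradicting the hypothesis. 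This value-based phrasing sidesteps any ambiguity about which optimal solution the solver returns, and it is the cleanest way to close the gap.
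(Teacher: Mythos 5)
Your monotonicity step is fine and coincides with the paper's, but the contradiction step has a genuine logical gap: every inequality you assemble points the same way, so the squeeze never closes. Write $L_c$ and $L_{c+1}$ for the optimal values of Mod-$S_c$ and Mod-$S_{c+1}$. Your chain establishes $L_c \le f(z^{\tau+c})+\phi(z^{\tau+c})$ (Theorem~\ref{th:lowerBound} plus $\hat\phi(z^{\tau+c}|S_{c+1})=\phi(z^{\tau+c})$), and separately $f(z^{\tau+c})+\phi(z^{\tau+c}) \ge L_{c+1}=L_c$, because the objective at a feasible point of a minimization problem is \emph{at least} the optimal value. Both statements say $f(z^{\tau+c})+\phi(z^{\tau+c}) \ge L_c$; neither supplies the reverse inequality. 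Hence you cannot conclude that $z^{\tau+c}$ attains the optimum of Mod-$S_{c+1}$, nor that ``the optimal value of Mod-$S_{c+1}$ equals $f(z^{\tau+c})+\phi(z^{\tau+c})$,'' and the final squeeze of $v^{\ast}$ rests on an equality that was never established. A telling symptom is that your argument nowhere uses strict concavity of $\phi$, nor the fact $z^{\tau+c}\ne z^{\tau+c+1}$ obtained from the contrapositive of Theorem~\ref{th:equal}; the paper's proof invokes exactly these two ingredients to rule out equality of the consecutive lower bounds, so a proof that never touches them is not closing the real gap.

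Worse, the value-level contrapositive you are aiming for --- ``$L_c=L_{c+1}$ forces $z^{\tau+c}$ to be optimal'' --- is false when Mod-$S_c$ has ties. Take $f\equiv 0$ and $\phi(x)=-(x-\tfrac12)^2$ on $[0,1]$ with $S_1=\{0,1\}$: then $\hat\phi(\cdot|S_1)\equiv-\tfrac14$, the solver may legitimately return $z^{\tau+1}=\tfrac12$, which is not optimal (the true minimum $-\tfrac14$ sits at the endpoints), yet after adding $\tfrac12$ to the sample set the new lower bound is still $-\tfrac14$. So no amount of bookkeeping about objective values alone can rescue the argument; any correct proof has to work, as the paper's does, at the level of the minimizers themselves --- showing that the optimizer of Mod-$S_{c+1}$ must differ from $z^{\tau+c}$ and that, under strict concavity of $\phi$, this difference forces a strict increase in the optimal value of the relaxation.
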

\begin{proof}
It is given that $z^{\tau+c}$ is the solution for \eqref{eq:objRel}-\eqref{eq:consRel} at iteration $c$, which is not optimal for the original problem (\eqref{eq:objOrig}-\eqref{eq:consConvexOrig}). Say that $z^{\tau+c+1}$ is the solution for \eqref{eq:objRel}-\eqref{eq:consRel} at iteration $c+1$, so from Theorem~\ref{th:equal} we can say that $z^{\tau+c} \ne z^{\tau+c+1}$. 

Note that for any given $x$, $\hat{\phi}(x|S_{c}) \le \hat{\phi}(x|S_{c+1})$, as the linear program corresponding to $\hat{\phi}(x|S_{c+1})$ is a relaxation of $\hat{\phi}(x|S_{c})$. This is shown in the next statement. If $\mu_{\tau+1} = 0$ is added in the linear program corresponding to $\hat{\phi}(x|S_{c+1})$, it becomes equivalent to the linear program corresponding to $\hat{\phi}(x|S_{c})$, which shows that $\hat{\phi}(x|S_{c+1})$ is a relaxation of $\hat{\phi}(x|S_{c})$.

Since $\hat{\phi}(x|S_{c}) \le \hat{\phi}(x|S_{c+1})$ for all $x$, we can say that $f(x) + \hat{\phi}(x|S_{c}) \le f(x) + \hat{\phi}(x|S_{c+1})$ for all $x$. This implies that for \eqref{eq:objRel}-\eqref{eq:consRel} comparing the objective function, we get $f(z^{\tau+c}) + \hat{\phi}(z^{\tau+c}) \le f(z^{\tau+c+1}) + \hat{\phi}(z^{\tau+c+1})$. Strict concavity of $\phi$ and $z^{\tau+c} \ne z^{\tau+c+1}$ ensure that the equality will not hold, implying $f(z^{\tau+c}) + \hat{\phi}(z^{\tau+c}) < f(z^{\tau+c+1}) + \hat{\phi}(z^{\tau+c+1})$. Therefore, the lower bound strictly improves in the next iteration.
\end{proof}

\begin{theorem}\label{th:termination}
If $z^{\tau+c}$ is the solution for \eqref{eq:objRel}-\eqref{eq:consRel} at iteration $c$, $z^{\tau+c+1}$ is the solution for \eqref{eq:objRel}-\eqref{eq:consRel} at iteration $c+1$, and $||z^{\tau+c+1}-z^{\tau+c}|| \le \delta$, then the optimal function value $v^{\ast}$ for \eqref{eq:objOrig}-\eqref{eq:consConvexOrig} has the following property: 
$0 \le f(z^{\tau+c+1})+\hat{\phi}(z^{\tau+c+1}|S_{c+1}) - v^{\ast} \le (K_1+K_2)\delta$, where $K_1$ and $K_2$ are the Lipchitz constants for $f(x)$ and $\phi(x)$, respectively.
\end{theorem}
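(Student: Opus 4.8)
The plan is to pin $v^{\ast}$ between a lower estimate supplied by Theorem~\ref{th:lowerBound} and an upper estimate supplied by the feasibility of the iterates, and to show that these two estimates are forced to within $(K_1+K_2)\delta$ of the value $f(z^{\tau+c+1})+\hat{\phi}(z^{\tau+c+1}|S_{c+1})$ once $\|z^{\tau+c+1}-z^{\tau+c}\|\le\delta$. The quantity kept under control throughout is exactly $f(z^{\tau+c+1})+\hat{\phi}(z^{\tau+c+1}|S_{c+1})-v^{\ast}$, and the two-sided conclusion is read off from this sandwich.

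First I would record the two bounding relations. Applying Theorem~\ref{th:lowerBound} at iteration $c+1$ gives a bound relating $f(z^{\tau+c+1})+\hat{\phi}(z^{\tau+c+1}|S_{c+1})$ — the optimal value of \eqref{eq:objRel}-\eqref{eq:consRel} over $S_{c+1}$ — to $v^{\ast}$. For the companion bound, $z^{\tau+c}$ is feasible for \eqref{eq:objOrig}-\eqref{eq:consConvexOrig}, so $v^{\ast}\le f(z^{\tau+c})+\phi(z^{\tau+c})$; this anchors an estimate of $v^{\ast}$ at the previous iterate rather than the current one, which is precisely what allows the hypothesis $\|z^{\tau+c+1}-z^{\tau+c}\|\le\delta$ to enter.

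The pivotal step is to convert the $\phi$ appearing in that upper bound into $\hat{\phi}$, so that a single Lipschitz comparison closes the gap. Because $S_{c+1}=S_c\cup z^{\tau+c}$, the point $z^{\tau+c}$ is one of the generators of the inner approximation, and hence the approximation is exact there, $\hat{\phi}(z^{\tau+c}|S_{c+1})=\phi(z^{\tau+c})$, giving $v^{\ast}\le f(z^{\tau+c})+\hat{\phi}(z^{\tau+c}|S_{c+1})$. I would then compare the two iterate-values of $f+\hat{\phi}$ on the \emph{same} set $S_{c+1}$. Since both $z^{\tau+c}$ and $z^{\tau+c+1}$ lie in $\text{conv}\,S_{c+1}$, Theorem~\ref{th:lipschitz} guarantees that $\hat{\phi}(\cdot|S_{c+1})$ is Lipschitz there with constant at most $K_2$, while $f$ is Lipschitz with constant $K_1$; the triangle inequality together with the hypothesis then yields
\begin{align}
\bigl|[f(z^{\tau+c})+\hat{\phi}(z^{\tau+c}|S_{c+1})]-[f(z^{\tau+c+1})+\hat{\phi}(z^{\tau+c+1}|S_{c+1})]\bigr|\le (K_1+K_2)\delta. \nonumber
\end{align}

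Finally I would assemble the estimates: the upper bound $v^{\ast}\le f(z^{\tau+c})+\hat{\phi}(z^{\tau+c}|S_{c+1})$ together with the displayed inequality confines $v^{\ast}$ to an interval of half-width $(K_1+K_2)\delta$ about $f(z^{\tau+c+1})+\hat{\phi}(z^{\tau+c+1}|S_{c+1})$, and combining this with the bounding relation from Theorem~\ref{th:lowerBound} selects the side on which $v^{\ast}$ falls, producing $0\le f(z^{\tau+c+1})+\hat{\phi}(z^{\tau+c+1}|S_{c+1})-v^{\ast}\le (K_1+K_2)\delta$. The step I expect to be the main obstacle is the uniform transfer of the Lipschitz constant to $\hat{\phi}(\cdot|S_{c+1})$: it must be argued from Theorem~\ref{th:lipschitz} that enlarging the generating set by $z^{\tau+c}$ never inflates the polyhedral approximation's Lipschitz constant beyond $K_2$, so that the constant in the final bound is genuinely $K_1+K_2$ and not iteration-dependent. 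The exactness identity $\hat{\phi}(z^{\tau+c}|S_{c+1})=\phi(z^{\tau+c})$ and the containment $z^{\tau+c+1}\in\text{conv}\,S_{c+1}$ are the two facts I would verify most carefully, since the whole sandwich rests on them.
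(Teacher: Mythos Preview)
Your proposal is correct and follows essentially the same route as the paper: establish $f(z^{\tau+c+1})+\hat{\phi}(z^{\tau+c+1}|S_{c+1})\le v^{\ast}\le f(z^{\tau+c})+\hat{\phi}(z^{\tau+c}|S_{c+1})$ via Theorem~\ref{th:lowerBound} and the exactness identity $\hat{\phi}(z^{\tau+c}|S_{c+1})=\phi(z^{\tau+c})$, then close the gap with the Lipschitz bound on $f+\hat{\phi}(\cdot|S_{c+1})$ supplied by Theorem~\ref{th:lipschitz}. Your treatment is, if anything, slightly more careful than the paper's in explicitly flagging that both iterates lie in $\text{conv}\,S_{c+1}$ and that the Lipschitz constant of $\hat{\phi}$ is uniform in the generating set.
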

\begin{proof}
Note that at iteration $c+1$, $\hat{\phi}(z^{\tau+c}|S_{c+1}) = \phi(z^{\tau+c})$ with $z^{\tau+c}$ being a feasible solution for \eqref{eq:objOrig}-\eqref{eq:consConvexOrig}. Therefore, $f(z^{\tau+c})+\hat{\phi}(z^{\tau+c}|S_{c+1})$ is the upper bound for $v^{\ast}$. Also $f(z^{\tau+c+1})+\hat{\phi}(z^{\tau+c+1}|S_{c+1})$ is the lower bound for $v^{\ast}$ from Theorem~\ref{th:lowerBound}.
$$
f(z^{\tau+c+1})+\hat{\phi}(z^{\tau+c+1}|S_{c+1}) \le v^{\ast} \le f(z^{\tau+c})+\hat{\phi}(z^{\tau+c}|S_{c+1})
$$
If $K_2$ is the Lipschitz constant for $\phi(x)$, then it is also the Lipschitz constant for $\hat{\phi}(x)$ from Theorem~\ref{th:lipschitz}. From the Lipschitz property, 
$$
f(z^{\tau+c+1})+\hat{\phi}(z^{\tau+c+1}|S_{c+1}) \le v^{\ast} \le f(z^{\tau+c})+\hat{\phi}(z^{\tau+c}|S_{c+1}) \le f(z^{\tau+c+1})+\hat{\phi}(z^{\tau+c+1}|S_{c+1}) + (K_1+K_2)\delta
$$
which implies
$0 \le f(z^{\tau+c+1})+\hat{\phi}(z^{\tau+c+1}|S_{c+1}) - v^{\ast} \le (K_1+K_2)\delta$.
\end{proof}

To illustrate the working of the algorithm, an example has been provided in the Appendix (see Section~\ref{sec:NumEx}). Next, we apply the algorithm on two common classes of concave minimization problems.

\section{Concave Knapsack Problem}\label{sec:knapsackprob}
The integer/binary knapsack problem requires determining the items to be chosen from a given collection of items with certain weights and values so as to maximize the total value without exceeding a given total weight limit. Over the last sixty years, integer/binary knapsack problems have received considerable attention mostly due to their wide variety of applications in financial decision problems, knapsack cryptosystems, combinatorial auctions, etc. \citep{kellerer2004some}. The integer/binary Knapsack problem is known to be NP-complete, for which a variety of algorithms have been reported in the literature, including Lagrangian relaxation \citep{fayard1982algorithm, fisher2004Lagrangian}, branch-and-bound (B$\&$B) \citep{kolesar1967branch}, dynamic programming \citep{martello1999dynamic}, and hybrid methods combining B$\&$B and dynamic programming \citep{marsten1978hybrid}, cut-and-branch algorithm \citep{fomeni2020cut}, etc. The literature has also seen a proliferation of papers on non-linear Knapsack problems (NKP), which arise from economies and dis-economies of scale in modelling various problems such as capacity planning \citep{bitran1989tradeoff}, production planning \citep{ziegler1982solving,ventura1988note,maloney1993constrained}, stratified sampling problems \citep{bretthauer1999nonlinear}, financial models \citep{mathur1983branch}, etc. NKP also arises as a subproblem in solving service system design problems and facility location problems with stochastic demand \citep{elhedhli2005exact}. NKP may be a convex or a non-convex problem in nature. Each of these types can be further classified as continuous or integer knapsack problems, separable or non-separable knapsack problems. In this paper, we aim to solve the concave separable integer knapsack problem (CSINK), where concavity in the objective function arises due to the concave cost structure. There are a plethora of applications that involve concave costs, such as capacity planning and fixed charge problems with integer variables \citep{bretthauer1995nonlinear, horst1998integer, horst2013global}, and other problems with economies of scale \citep{pardalos1987constrained}. Specifically, applications of CSINK include communication satellite selection \citep{witzgall1975mathematical}, pluviometer selection in hydrological studies \citep{gallo1980quadratic,caprara1999exact}, compiler design \citep{johnson1993min, pisinger2007quadratic}, weighted maximum b-clique problems \citep{park1996extended, dijkhuizen1993cutting, pisinger2007quadratic, caprara1999exact}. Due to its wide applications, CSINK has attracted a lot of researchers to solve it efficiently.\par
\cite{gallo1980quadratic} reported one of the first approaches for the quadratic knapsack problem by utilizing the concept of the upper plane, which is generated by the outer linearization of the concave function. Researchers have also come up with different B$\&$B-based algorithms to solve the concave minimization version of the problem with integer variables \citep{marsten1978hybrid,victor1986branch, benson1990algorithm, bretthauer1994algorithm, caprara1999exact}. \cite{chaillou1989best} proposed a Lagrangian relaxation-based bound of the quadratic knapsack problem. \cite{more1990solution} proposed an algorithm that characterizes local minimizers when the objective function is strictly concave and used this characterization in determining the global minimizer of a continuous concave knapsack problem with linear constraints. \cite{michelon1996Lagrangean} reported a Lagrangian based decomposition technique for solving the concave quadratic knapsack problem. Later, \cite{sun2005exact} developed an iterative procedure of linearly underestimating the concave function and executing domain cut and partition by utilizing the special structure of the problem for solving CSINK. They used their proposed algorithm to solve only a single-dimensional CSINK, i.e., one with only one constraint.   
Most recently, \cite{wang2019new} reported an exact algorithm that combines the \emph{contour cut} \citep{li2006convergent} with a special cut to gradually reduce the duality gap through an iterative process to solve a multidimensional CSINK, i.e., one with multiple knapsack constraints. Wang showed that his proposed algorithm outperformed the hybrid method proposed by \cite{marsten1978hybrid}, which to the best of our knowledge, is the only other exact method to solve multidimensional CSINK.\par
The model for CSINK is described below:

\begin{align}
	\min_{x} & \; \; \phi(x) = \sum_{j=1}^n \phi_j(x_j) \label{eq:objknap}\\
	\st & \notag\\
	& Ax \leq b \label{eq:consknap1}\\
	& x \in X = \{x \in \mathbb{Z}^n \; | \; l_j \leq x_j \leq u_j \label{eq:consknap2}\}
\end{align}
where $\phi_j(x_j), \; j=1.\ldots n$ are concave non-decreasing functions, $A \in \mathbb{R}^{m \times n}$, $b \in \mathbb{R}^m$ and $l = (l_1,\ldots l_n)^T, u = (u_1,\ldots u_n)^T$ are lower and upper bounds of $x$ respectively. For $m\geq 1$, the above problem is commonly referred to as multidimensional CSINK. In the next section, we discuss the data-sets used for the computational experiments and present the results of the IA algorithm to solve multidimensional CSINK. We benchmark our method against \cite{wang2019new}.

\subsection{Computational Experiments}\label{sec:compuExperKnapsack}
In this section, we present the data generation technique, followed by a discussion on computational results. All computational experiments are carried out on a PC with Pentium(R) Dual-core CPU i5-6200U @2.3 GHz and 8 GB RAM. As described in Section 2, the IA algorithm is coded in C++, and the MIP in step 5 of Algorithm~\ref{alg:ConcaveProblem} is solved using the default Branch\&Cut solver of CPLEX 12.7.1. The optimality gap ($\epsilon=0.01$) is calculated as $\frac{UB-LB}{LB}~\times~100$, where $UB$ and $LB$ denote the upper bound and lower bound for the original problem, respectively. The algorithm is set to terminate using $\epsilon=0.01$ in step 4 of Algorithm~\ref{alg:ConcaveProblem} or using a CPU time limit of 2 hours, whichever reaches first. We compare the computational performance of our method against \cite{wang2019new}. The experiments by \cite{wang2019new} are done on a PC with Pentium(R) Dual-core CPU E6700 @3.2GHz, which is approximately $2.79$ times slower than our system (https://www.cpubenchmark.net/singleCompare.php). Hence, for a fair comparison, we scale the computational times of the IA algorithm by a factor of $2.79$.\par
\subsubsection{Data-Set}
All our computational experiments are performed on random data-sets, generated using the following scheme as described by \cite{wang2019new}. In all the test data-sets: $A=\{a_{ij}\}_{n \times m} \in [-20,-10]$; $b_i=\sum_{j=1}^n a_{ij}l_{j} + r \left(\sum_{j=1}^{n} a_{ij}u_{j} - \sum_{j=1}^{n} a_{ij}l_{j}\right)$; where $r=0.6$; and $l_j=1, u_j=5;n\in \{30,\ldots, 150\}, m \in\{10,15\} $. Further, we employ two different forms of concavity in the objective function \eqref{eq:objknap}: (i) polynomial form; and (ii) non-polynomial form. The parameters settings for both categories are briefly described as follows. 

\begin{enumerate}
[(i)]\item Polynomial concave function:
\begin{align*}
\phi(x)= \sum_{j=1}^n (c_j x_j^4 + d_j x_j^3+e_jx_j^2+h_jx_j)
\end{align*}
We use the following three kinds of polynomial concave functions, as used by \cite{wang2019new}. For a fixed pair $n$ and $m$, ten random test problems are generated from a uniform distribution using the following scheme.
\begin{itemize}
\item Quadratic: $c_j =0, d_j =0,e_j \in [-15,-1], h_j \in [-5,5], j = 1,\ldots, n$.
 \item Cubic: $c_j =0, d_j \in (-1,0),e_j \in [-15,-1] , h_j \in [-5,5], j = 1,\ldots, n$.
 \item Quartic: $c_j \in (-1,0), d_j \in (-5,0), e_j \in [-15,-1] , h_j \in [-5,5], j = 1,\ldots, n$
\end{itemize} 
\item Non-polynomial concave function:
\begin{align*}
\phi(x)= \sum_{j=1}^n (c_j ln(x_j) +d_jx_j) 
\end{align*}
 Once again, we generate 10 random data instances for a fixed pair $n$ and $m$ using uniform distribution with the following parameters: $c_j \in (0,1), d_j \in [-20,-10],j=1,\ldots ,n.$
\end{enumerate}
\subsubsection{Computational Results}\label{sec:compuResultsKnapsack}
\begin{table}[htbp]
  \centering
  \begin{threeparttable}
  \caption{Experimental results for quadratic concave knapsack problem}
    \begin{tabular}{ccccccccc}
    \toprule
          &       & \multicolumn{7}{c}{CPU Time (seconds)} \\
\cmidrule{3-9}          &       & \multicolumn{3}{c}{IA Algorithm$^{\ast}$} &       & \multicolumn{3}{c}{\cite{wang2019new}} \\
\cmidrule{3-5}\cmidrule{7-9}   n$\times$m  &       & Avg   & Min   & Max   &       & Avg   & Min   & Max \\
    \midrule
    30$\times$10 &       & \textbf{5.62} & 0.32  & 20.48 &       & 17.85 & 0.41  & 75.64 \\
    40$\times$10 &       & \textbf{3.80} & 0.16  & 10.24 &       & 50.98 & 2.05  & 350.94 \\
    50$\times$10 &       & \textbf{3.99} & 0.81  & 12.74 &       & 142.39 & 1.34  & 980.34 \\
    80$\times$10 &       & \textbf{19.32} & 1.61  & 40.32 &       & 793.83 & 14.81 & 7212.39 \\
    150$\times$10 &       & \textbf{12.65} & 0.48  & 46.37 &       & 1116.95 & 0.01  & 3232.39 \\
    20$\times$15 &       & \textbf{5.33} & 0.48  & 17.66 &       & 31.77 & 2.88  & 237.75 \\
    30$\times$15 &       & \textbf{44.67} & 0.81  & 132.10 &       & 140.91 & 1.14  & 587.64 \\
    40$\times$15 &       & \textbf{87.96} & 1.77  & 241.38 &       & 710.48 & 2.45  & 3125.30 \\
    \midrule
    Avg   &       & \textbf{22.92} & 0.81  & 65.16 &       & 375.65 & 3.14  & 1975.30 \\
    \bottomrule
    \end{tabular}%
 \begin{tablenotes}[normal,flushleft]
\item $^{\ast}$Original CPU times are scaled by 2.79 for a fair comparison.
\end{tablenotes}
 \label{table:tabquad}%
 \end{threeparttable}
\end{table}%

\begin{table}[htbp]
  \centering
   \begin{threeparttable}
  \caption{Experimental results for cubic concave knapsack problem}
    \begin{tabular}{ccccccccc}
    \toprule
          &       & \multicolumn{7}{c}{CPU Time (seconds)} \\
\cmidrule{3-9}          &       & \multicolumn{3}{c}{IA Algorithm$^{\ast}$} &       & \multicolumn{3}{c}{Wang's Method} \\
\cmidrule{3-9}    n$\times$m  &       & Avg   & Min   & Max   &       & Avg   & Min   & Max \\
\cmidrule{1-1}\cmidrule{3-9}    30$\times$10 &       & \textbf{10.74} & 0.25  & 35.82 &       & 10.91 & 0.25  & 26.80 \\
    40$\times$10 &       & \textbf{10.98} & 0.18  & 34.50 &       & 30.73 & 1.30  & 98.47 \\
    60$\times$10 &       & \textbf{31.18} & 0.93  & 92.38 &       & 166.62 & 6.72  & 1002.75 \\
    80$\times$10 &       & \textbf{182.98} & 1.63  & 876.89 &       & 275.28 & 5.08  & 1672.88 \\
    90$\times$10 &       & \textbf{209.05} & 0.30  & 1169.80 &       & 631.08 & 0.00  & 5457.95 \\
    20$\times$15 &       & \textbf{29.51} & 1.47  & 117.04 &       & 153.48 & 1.41  & 1059.48 \\
    30$\times$15 &       & \textbf{102.01} & 4.60  & 267.33 &       & 159.91 & 3.58  & 999.77 \\
    50$\times$15 &       & \textbf{858.22} & 65.34 & 3432.73 &       & 2172.94 & 36.08 & 12747.97 \\
    \midrule
    Avg   &       & \textbf{179.33} & 9.34  & 753.31 &       & 450.12 & 6.80  & 2883.26 \\
    \bottomrule
    \end{tabular}%
  \begin{tablenotes}[normal,flushleft]
\item $^{\ast}$Original CPU times are scaled by 2.79 for a fair comparison.
\end{tablenotes}
\label{table:tabcubic}%
 \end{threeparttable}
\end{table}%

Tables~\ref{table:tabquad}-\ref{table:tablog} provide a comparison of the computational performance of the IA algorithm against  \cite{wang2019new}. Since \cite{wang2019new} report only the average, minimum, and maximum CPU times over $10$ randomly generated instances for each size of the problem, we also do the same for a meaningful comparison. It is important to highlight that, as discussed earlier in this section, for a fair comparison, the CPU times for the IA algorithm have been scaled by a factor of $2.79$ before reporting in Tables~\ref{table:tabquad}-\ref{table:tablog}. For each problem size, the better of the two average CPU times (one for the IA algorithm and the other for \cite{wang2019new}) is highlighted in boldface. Tables~\ref{table:tabquad}-\ref{table:tabquar} provide the results corresponding to the three different polynomial forms (quadratic, cubic and quartic). As evident from the tables, the IA algorithm consistently outperforms \cite{wang2019new} over all the instances for the case of the quadratic objective function \eqref{eq:objknap}, and over most of the instances for the other forms of the objective function except for the few easy instances. Specifically, for the quadratic objective function, the IA algorithm takes 22.92 seconds on average, over all the instances, which is less than one-sixteenth of 375.65 required by \cite{wang2019new}. For the cubic and the quartic form of the objective function, the average times over all the instances taken by the IA algorithm are 179.33 and 101.73, respectively, while the average times taken by \cite{wang2019new} are 450.12 and 259.32. Table~\ref{table:tablog} provides the results corresponding to the non-polynomial (logarithmic) form of the objective function \eqref{eq:objknap}. Clearly, the IA algorithm consistently outperforms \cite{wang2019new} over all the instances for the case of the logarithmic objective function, taking an average of only 1.48 seconds, which is around 114 times smaller than the average time of 169.18 seconds taken by \cite{wang2019new}.\par
\begin{table}[ptbh]
  \centering
  \begin{threeparttable}
  \caption{Experimental results for quartic concave knapsack problem}
    \begin{tabular}{ccccccccc}
\toprule          &       & \multicolumn{7}{c}{CPU Time (seconds)} \\
\cmidrule{3-9}          &       & \multicolumn{3}{c}{IA Algorithm$^{\ast}$} &       & \multicolumn{3}{c}{\cite{wang2019new}} \\
\cmidrule{3-5}\cmidrule{7-9}    n$\times$m  &       & Avg   & Min   & Max   &       & Avg   & Min   & Max \\
    \midrule
    30$\times$10 &       & 23.22 & 0.88  & 89.33 &       & \textbf{13.09} & 2.02  & 35.81 \\
    50$\times$10 &       & 57.44 & 0.16  & 223.17 &       & \textbf{44.92} & 4.61  & 153.17 \\
    70$\times$10 &       & \textbf{41.63} & 1.17  & 200.05 &       & 396.58 & 8.03  & 1994.47 \\
    100$\times$10 &       & \textbf{145.29} & 3.46  & 968.65 &       & 611.00 & 8.80  & 4963.13 \\
    20$\times$15 &       & \textbf{77.74} & 7.24  & 272.28 &       & 117.90 & 3.03  & 402.61 \\
    30$\times$15 &       & \textbf{88.55} & 0.88  & 434.04 &       & 188.39 & 3.22  & 1583.52 \\
    40$\times$15 &       & \textbf{278.23} & 0.57  & 1081.13 &       & 443.32 & 6.09  & 1281.44 \\
    \midrule
    Avg   &       & \textbf{101.73} & 2.05  & 466.95 &       & 259.32 & 5.11  & 1487.73 \\
    \bottomrule
    \end{tabular}%
 \begin{tablenotes}[normal,flushleft]
\item $^{\ast}$Original CPU times are scaled by 2.79 for a fair comparison.
\end{tablenotes}
 \label{table:tabquar}%
  \end{threeparttable}
\end{table}%

\begin{table}[ptbh]
  \centering
  \begin{threeparttable}
  \caption{Experimental results for logarithmic concave knapsack problem($\phi(x)=\sum_{j=1}^n (c_j ln(x_j) +d_jx_j)$)}
    \begin{tabular}{ccccccccc}
    \toprule
          &       & \multicolumn{7}{c}{CPU Time (seconds)} \\
\cmidrule{3-9}          &       & \multicolumn{3}{c}{IA Algorithm$^{\ast}$} &       & \multicolumn{3}{c}{\cite{wang2019new}} \\
\cmidrule{3-5}\cmidrule{7-9}    n$\times$m  &       & Avg   & Min   & Max   &       & Avg   & Min   & Max \\
    \midrule
    30$\times$10 &       & \textbf{0.32} & 0.11  & 0.70  &       & 3.97  & 0.20  & 18.59 \\
    50$\times$10 &       & \textbf{0.43} & 0.10  & 0.88  &       & 9.54  & 1.08  & 24.11 \\
    70$\times$10 &       & \textbf{0.47} & 0.10  & 1.13  &       & 44.11 & 1.86  & 147.33 \\
    95$\times$10 &       & \textbf{0.61} & 0.14  & 3.08  &       & 277.36 & 0.02  & 1484.20 \\
    30$\times$15 &       & \textbf{1.05} & 0.12  & 4.59  &       & 12.70 & 0.94  & 39.55 \\
    50$\times$15 &       & \textbf{2.43} & 0.32  & 9.69  &       & 289.21 & 9.11  & 1156.89 \\
    70$\times$15 &       & \textbf{5.07} & 0.35  & 28.24 &       & 547.38 & 27.03 & 2100.30 \\
    \midrule
    Avg   &       & \textbf{1.48} & 0.18  & 6.90  &       & 169.18 & 5.75  & 710.14 \\
    \bottomrule
    \end{tabular}%
 \begin{tablenotes}[normal,flushleft]
\item $^{\ast}$Original CPU times are scaled by 2.79 for a fair comparison.
\end{tablenotes}
 \label{table:tablog}%
  \end{threeparttable}
\end{table}%

To further see the difference in the performances of two methods, we present their performance profiles \citep{dolan2002benchmarking} in Figures \ref{fig:quadavg}-\ref{fig:logavg}. For this, let $t_{p,s}$ represent the CPU time to solve instance $p \in P$ using method $s \in S$. Using this notation, the performance ratio ($r_{p,s}$), which is defined as the ratio of the CPU time taken by a given method to that taken by the best method for that instance, can be mathematically given as follows:\\
\begin{equation}
r_{p,s} = \frac{t_{p,s}}{\min_{s \in S} t_{p,s}}
\end{equation}
If we assume $r_{p,s}$ as a random variable, then the performance profile $(p_{s}(\tau) )$ is the cumulative distribution function of $r_{p,s}$ at $2^{\tau}$, mathematically expressed as $p_{s}(\tau)=P(r_{p,s} \leq 2^{\tau})$. In other words, it gives the probability that the CPU time taken by the method $p$ does not exceed $2^{\tau}$ times that taken by the best of the two methods. Further, for a given method $p$, the intercept of its performance profile on the y-axis shows the proportion of the instances for which it performs the best. The performance profiles for the polynomial functions are displayed in Figures \ref{fig:quadavg}-\ref{fig:quarticavg}, and the same for the non-polynomial function are displayed in Figure \ref{fig:logavg}. In the absence of the CPU times for each individual instance by \cite{wang2019new}, we use the average computational times (after scaling by a factor of 2.79) for creating the performance profiles. 
From Figures \ref{fig:quadavg}-\ref{fig:quarticavg}, it can be concluded that the IA algorithm outperforms \cite{wang2019new} for 100\% of the instances for the quadratic objective function and cubic objective function, while it is better for 71.42\% of the instances for the quartic objective functions. For the non-polynomial (logarithmic) form of the objective function, Figure~\ref{fig:logavg} shows the IA algorithm as outperforming \cite{wang2019new} for 100\% of the data instances. Furthermore, for the instances (for quadratic and non-polynomial objective functions) on which the performance of \cite{wang2019new} is worse than that of the IA algorithm, it is unable to solve them to optimality even after $16=(2^4)$ times the CPU time taken by the IA algorithm. Next, we discuss the formulation of the production-transportation problem and report the results of the IA algorithm benchmarking it against two approaches.
	\begin{figure}[ptbh]
		\centering
		\begin{minipage}{.48\textwidth}
			\centering
			\includegraphics[width=1.13\linewidth]{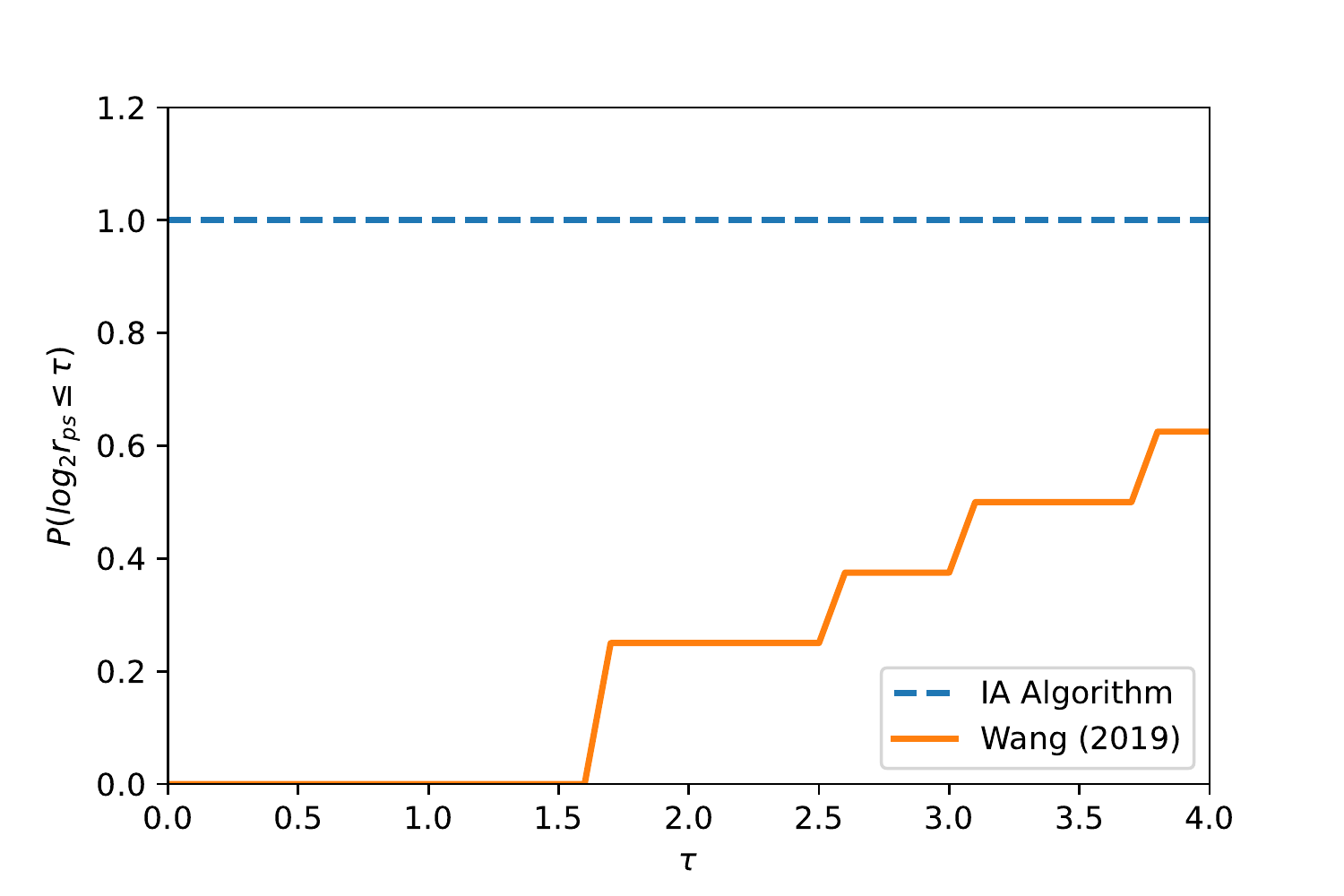}
			\caption{Performance profile of quadratic knapsack problem}
			\label{fig:quadavg}
		\end{minipage}\hfill
		\begin{minipage}{.48\textwidth}
			\centering
			\includegraphics[width=1.13\linewidth]{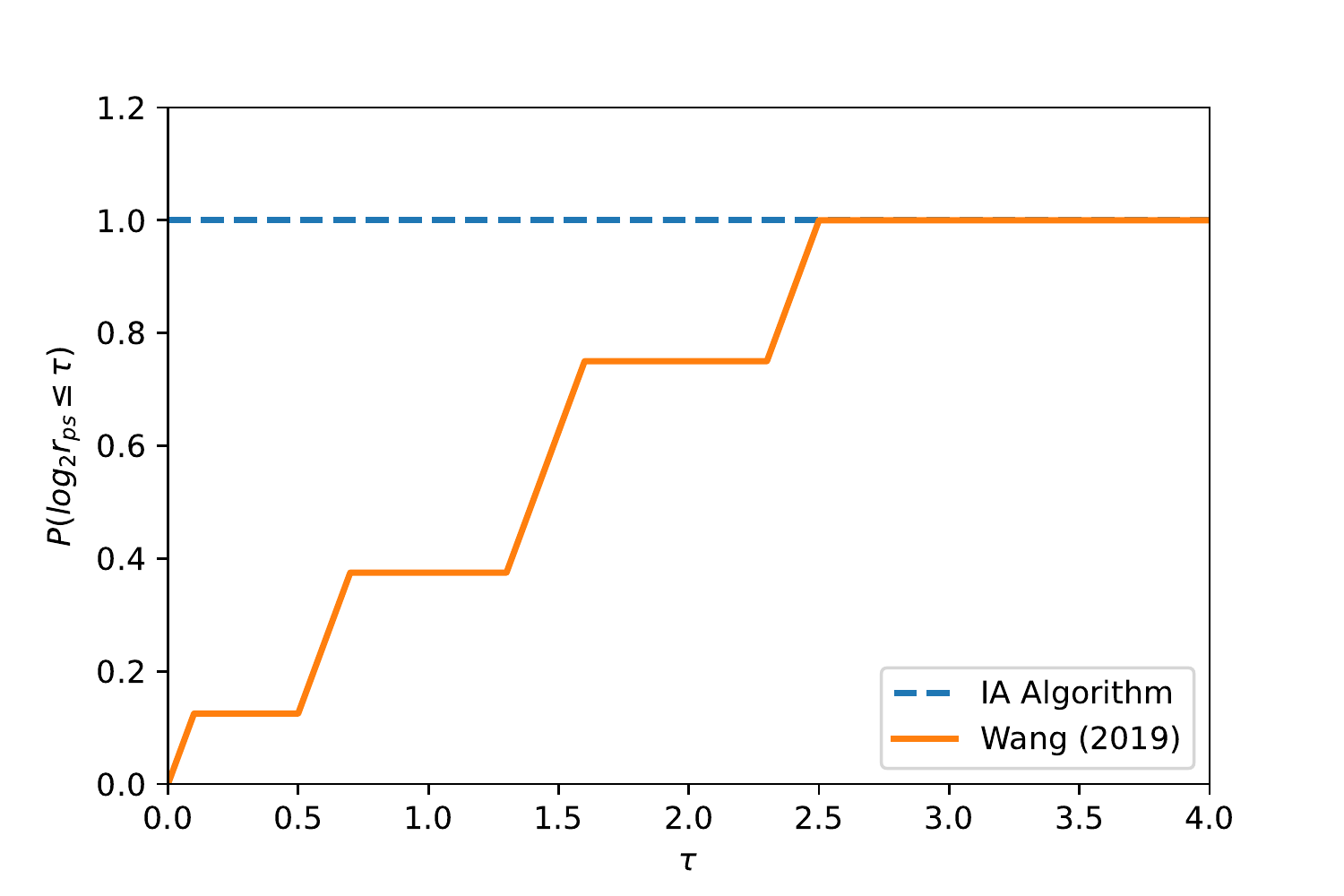}
			\caption{Performance profile of cubic knapsack problem}
			\label{fig:cubicavg}
		\end{minipage}
	\end{figure}
	\begin{figure}
		\centering
		\begin{minipage}{.48\textwidth}
			\centering
			\includegraphics[width=1.13\linewidth]{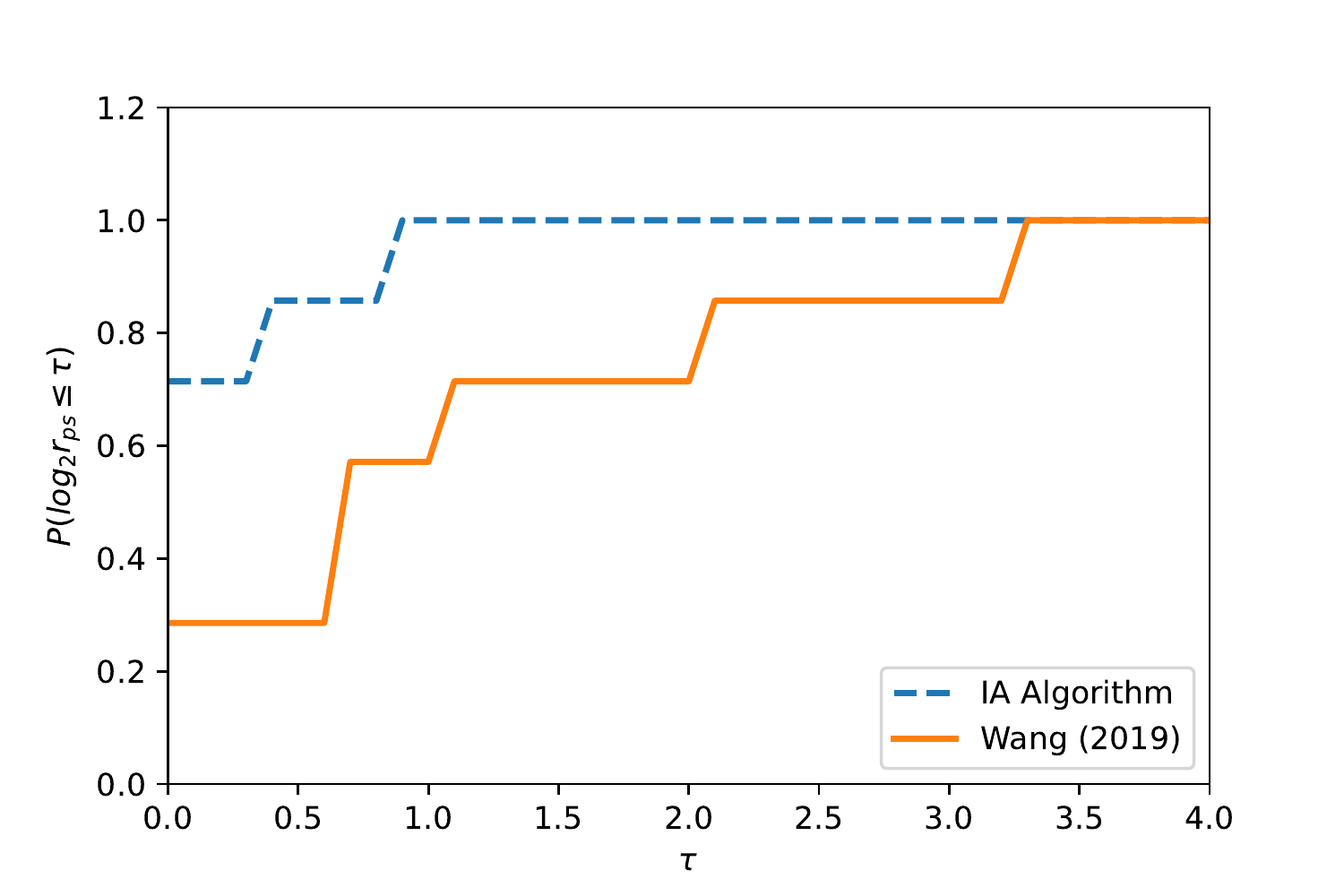}
			\caption{Performance profile of quartic knapsack problem}
			\label{fig:quarticavg}
		\end{minipage}\hfill
		\begin{minipage}{.48\textwidth}
			\centering
			\includegraphics[width=1.13\linewidth]{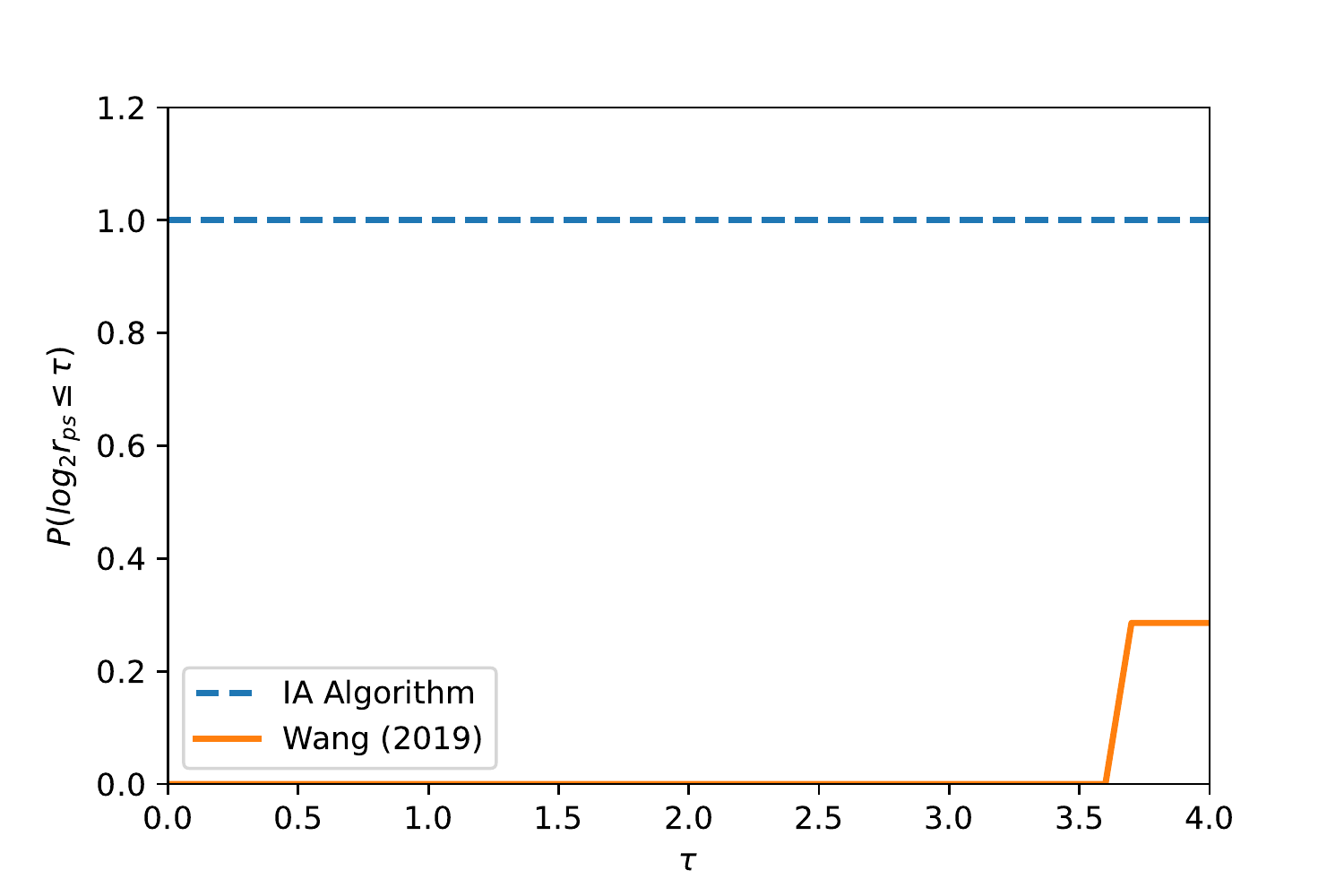}
			\caption{Performance profile of log knapsack problem}
			\label{fig:logavg}
		\end{minipage}
	\end{figure}
\section{Production-Transportation Problem}\label{sec:prodtransprob}
The transportation problem is a classical optimization problem, which entails finding the minimum cost of transporting homogeneous products from a set of sources (e.g. factories) with their given supplies to meet the given demands at a set of destinations (e.g. warehouses). The production-transportation problem extends the classical transportation problem by introducing a production-related variable at each of the given sources, which decides the supply available at that source. The problem entails finding the production quantity at each source, besides the transportation quantities from the supply sources to meet the demands at the destinations, at the minimum total production and transportation cost. To formally define a production-transportation problem, let $G= (V, U, E)$ be a bipartite graph, where $V$ and $U$ denote the sets of $m$ sources and $n$ destinations, respectively, and $E$ denotes the set of $m \times n$ transportation arcs between the sources and the destinations. Let $c_{ij}$ be the non-negative transportation cost per unit of the product on arc $(i,j) \in E$, and $\phi_i(y_i)$ be the cost of producing $y_i$ units at source $i \in V$. Further, let $d_j$ and $k_i$ represent the demand at destination $j \in U$ and the production capacity at source $i \in V$, respectively. If we define $x_{ij}$ as the amount of the product transported on the arc from $i$ to $j$, and $y_{i}$ as the production quantity at source $i$, then a production-transportation problem can be mathematically stated as:
\begin{align}
	\min_{x,y} & \; \; \sum_{(i,j)\in E}c_{ij}x_{ij} \; + \; \sum_{i\in V} \phi_i(y_i) \label{eq:prod-trans_Obj}\\
	\st & \notag \\
	& \sum_{j\in U} x_{ij} \leq y_i, && \forall \; i \in V \label{eq:prod-trans_supply_cap}\\
	& y_i \leq k_i, && \forall \; i \in V \label{eq:prod-trans_prod_cap}\\
	& \sum_{i\in V} x_{ij} \geq d_j, && \forall \; j \in U \label{eq:prod-trans_demand}\\
	& x_{ij} \geq 0, && \forall \; (i,j) \in E \label{eq:prod-trans_x_nonneg}\\
	& y_i \geq 0, && \forall \; i \in \; V \label{eq:prod-trans_y_nonneg}
\end{align}
\eqref{eq:prod-trans_Obj}-\eqref{eq:prod-trans_y_nonneg} specifically models the multiple sourcing version of the problem, by allowing any destination $j \in V$ to receive its shipment in parts from several supply sources $i \in U$. The single sourcing variant of the problem, which is also common in the literature, requires that any destination $j \in V$ receives its shipment from only one supply source $i \in U$. This is modelled by imposing a binary restriction on the $x$ variables in the model. The model for single sourcing is provided in \ref{prod_single}. \par
In this paper, we are interested in testing the efficacy of the IA algorithm, as described in Section~\ref{sec:algorithmDescription}, in solving the non-linear production-transportation problem in which the production cost $\phi_i(y_i)$ is concave. To the best of our knowledge, \cite{sharp1970decomposition} were the first to report a non-linear production-transportation problem. However, the production cost $\phi_i(y_i)$ was assumed to be convex, which is relatively easier than its concave counterpart. The production-transportation problem can be viewed as a capacitated minimum cost network flow problem (MCNF) having $(m)$ variables representing the production cost function and $(mn)$ variables representing the transportation cost function. For $m \ll n$, the production-transportation problem with concave production cost has a low-rank concavity \citep{konno1997low}. \cite{guisewite1993polynomial,klinz1993minimum,kuno1997pseudo,kuno1997pseudoa,tuy1993strongly,tuy1993stronglya,tuy1996strongly} have proposed methods specifically suited when the problem has low-rank concavity. These methods belong to a group of polynomial or pseudo-polynomial algorithms in $n$, which do not scale well for $m >3$.
More scalable approaches are B\&B based algorithms, which consist of two varieties. For the single source uncapacitated version of minimum concave cost network flow problem, \cite{gallo1980algorithm,guisewite1991global} implicitly enumerate the spanning tree of the network.\cite{falk1969algorithm,soland1971algorithm,horst1976algorithm,benson1985finite,locatelli2000finite} use linear under-estimators to approximate the concave function, which is improved by partitioning the feasible space. Later, \cite{kuno2000lagrangian} proposed a Lagrangian relaxation-based B$\&$B to solve the multiple sourcing production-transportation problems with concave cost. Subsequently, \cite{saif2016supply} used Lagrangian relaxation-based B$\&$B approaches to solve both the multiple and single sourcing versions of the problem. Recently, \cite{wu2021solving} proposed a deterministic annealing neural network-based method and two neural networks to solve the multiple sourcing version of the production-transportation problem. The authors tested the method for problems with small dimensions. They neither tested the method on high dimensional data-sets nor compared the computational performances against existing methods.\par  
%
%
The literature on production-transportation problems has also seen several other variants/extensions of the basic problem. \cite{holmberg1999production} studied a production-transportation problem with concave production cost and convex transportation cost, resulting in a difference of convex (DC) optimization problem, which is solved using a B\&B method. 
\cite{nagai2005simplicial} studied production-transportation problems with inseparable concave production costs, which is solved using a B\&B method. \cite{condotta2013tabu} studied a production scheduling-transportation problem with only one supply source and one destination. The objective of the problem is to schedule the production of a number of jobs with given release dates and processing times, and to schedule their transportation to the customer using a number of vehicles with limited capacity so as to minimize the maximum lateness.\par
%
Next, we describe our computational experiments on both the multiple and single sourcing versions of the production-transportation problem using our proposed IA algorithm. 
%

\subsection{Computational Experiments}\label{sec:transresults}
In this section, we present the data generation technique, followed by computational results for the multiple sourcing and single sourcing versions of the production-transportation problem. The choice of the solver, platform, and server configuration remains the same as reported in Section~\ref{sec:compuExperKnapsack}.
The experiments are set to terminate using $\epsilon=0.01$ in step 4 of Algorithm~\ref{alg:ConcaveProblem} or a maximum CPU time limit, whichever reaches earlier. A maximum CPU time of 30 minutes is used for multiple sourcing, and that of 7 hours is used for single sourcing problems. 
\subsubsection{Data-Set}
The data used in the experiments are generated using the scheme described by \cite{kuno2000lagrangian}. The concave cost function, $\phi_i(y_i) = \gamma \sqrt{y_i}$, where $\gamma \sim$ Uniform$\{10, 20\}$; number of sources, $m=|V| \in \{5, \ldots, 25\}$ for multiple sourcing and $m=|V| \in \{5, \ldots, 15\}$ for single sourcing; number of destinations, $n=|U| \in \{25, \ldots, 100\}$; transportation cost, $c_{ij} \sim$ Uniform$\{1, 2, \ldots, 10\} \: \forall \; (i,j) \in E$; production capacity at source $i$, $k_i = 200 \: \forall \; i \in V$; demand at destination $j$, $d_j = \left\lceil\frac{\alpha\sum_{i \in V} k_i}{|U|}\right\rceil \: \forall \; j \in U$, where $\alpha \in \{0.60, 0.75, 0.90\}$ is a measure of capacity tightness.

\subsubsection{Computational Results}
Tables~\ref{table:tabprodmulti1}-\ref{table:tabprodmulti3} provide a comparison of the computational performance of the IA algorithm against those reported by \cite{kuno2000lagrangian} and \cite{saif2016supply}.
The columns \cite{kuno2000lagrangian} and \cite{saif2016supply} represent the computational results reported by the respective authors. The missing values in some of the rows indicate that the authors did not provide results for the corresponding data instances. Since both \cite{kuno2000lagrangian} and \cite{saif2016supply} reported only the average and the maximum CPU times over 10 randomly generated test instances (each corresponding to a randomly selected pair of values of $\gamma$ and $c_{ij}$) for each size of the problem, we also do the same for a meaningful comparison. For each problem size, the best average CPU time among the three methods is highlighted in boldface. The following observations can be immediately made from the tables: (i) Of the very selected instances for which \cite{saif2016supply} has reported the computational results, his method never performs the best except for a few very easy instances that can be solved within a fraction of a second. (ii) Between the remaining two methods, our IA algorithm outperforms \cite{kuno2000lagrangian} on the majority of the instances for which the results have been reported by the latter.
When $\alpha= 0.75$, for which \cite{kuno2000lagrangian} have reported their results across all the problem sizes used in our experiments (refer to Table~\ref{table:tabprodmulti2}), their method takes 58.49 seconds on average, compared to 5.06 seconds taken by our IA algorithm. To further see the difference between the two methods, we present their performance profiles (created based on the average CPU times) in Figure~\ref{fig:perprofileprod}. The figure shows the IA algorithm to be better on 68.75\% of the instances, while the method by \cite{kuno2000lagrangian} performs better on the remaining 31.25\%. Further, on the instances on which the method by \cite{kuno2000lagrangian} performs worse, it is unable to solve around 50\% of them to optimality even after taking 16 ($=2^4$) times the CPU time taken by the IA algorithm.

\begin{table}[htbp]
  \centering
  \begin{threeparttable}
  \caption{Experimental results of production-transportation problem with multiple sourcing ($\alpha = 0.6$)}
    \begin{tabular}{ccccccccccc}
    \toprule
          &       & \multicolumn{9}{c}{CPU Time (seconds)} \\
\cmidrule{3-11}          &       & \multicolumn{2}{c}{IA Algorithm} &       &       & \multicolumn{2}{c}{\cite{kuno2000lagrangian}} &       & \multicolumn{2}{c}{\cite{saif2016supply}} \\
\cmidrule{3-4}\cmidrule{7-8}\cmidrule{10-11}     \multicolumn{1}{c}{m$\times$n} &       & Avg   & Max   &       &       & Avg   & Max   &       & Avg   & Max \\
    \midrule
    \multicolumn{1}{c}{5$\times$25} &       & 0.52  & 1.44  &       &       & \textbf{0.21} & 0.37  &       & 0.35  & 0.66 \\
    \multicolumn{1}{c}{5$\times$50} &       & 1.09  & 2.99  &       &       & 1.65  & 2.43  &       & \textbf{0.86} & 1.89 \\
    \multicolumn{1}{c}{5$\times$75} &       & 1.03  & 2.24  &       &       & -     & -     &       & -     & - \\
    \multicolumn{1}{c}{5$\times$100} &       & 1.48  & 3.49  &       &       & -     & -     &       & -     & - \\
    \multicolumn{1}{c}{10$\times$25} &       & \textbf{1.06} & 2.59  &       &       & 3.13  & 8.03  &       & 2.43  & 5.41 \\
    \multicolumn{1}{c}{10$\times$50} &       & \textbf{8.37} & 25.29 &       &       & 71.46 & 239.17 &       & 20.43  & 34.48 \\
    \multicolumn{1}{c}{10$\times$75} &       & 13.99 & 81.97 &       &       & -     & -     &       & -     & - \\
    \multicolumn{1}{c}{10$\times$100} &       & 72.92 & 284.43 &       &       & -     & -     &       & -     & - \\
    \multicolumn{1}{c}{15$\times$25} &       & 4.41  & 26.13 &       &       & \textbf{0.44} & 1.25  &       & -     & - \\
    \multicolumn{1}{c}{15$\times$50} &       & \textbf{5.42} & 15.53 &       &       & 87.68 & 260.82 &       & -     & - \\
    \multicolumn{1}{c}{15$\times$75} &       & 156.88 & 733.45 &       &       & -     & -     &       & -     & - \\
    \multicolumn{1}{c}{15$\times$100} &       & 64.81 & 190.73 &       &       & -     & -     &       & -     & - \\
    \multicolumn{1}{c}{20$\times$75} &       & 156.85 & 1247.16 &       &       & -     & -     &       & -     & - \\
    \multicolumn{1}{c}{20$\times$100} &       & 81.49 & 667.31 &       &       & -     & -     &       & -     & - \\
    \multicolumn{1}{c}{25$\times$75} &       & 3.80  & 17.76 &       &       & -     & -     &       & -     & - \\
    \multicolumn{1}{c}{25$\times$100} &       & 6.36  & 32.85 &       &       & -     & -     &       & -     & - \\
    \midrule
    Avg   &       & 36.28 & 208.46 &       &       & -     & -     &       & -     & - \\
\bottomrule
    \end{tabular}%
       \begin{tablenotes}[normal,flushleft]
\item - denotes that the result is not provided by the respective author
\end{tablenotes}
  \label{table:tabprodmulti1}%
  \end{threeparttable}
\end{table}%
\begin{table}[htbp]
  \centering
\begin{threeparttable}
  \caption{Experimental results of production-transportation problem with multiple sourcing ($\alpha = 0.75$)}
    \begin{tabular}{ccccccccccc}
    \toprule
          &       & \multicolumn{9}{c}{CPU Time (seconds)} \\
\cmidrule{3-11}          &       & \multicolumn{2}{c}{IA Algorithm } &       &       & \multicolumn{2}{c}{\cite{kuno2000lagrangian}} &       & \multicolumn{2}{c}{\cite{saif2016supply}} \\
\cmidrule{3-4}\cmidrule{7-8}\cmidrule{10-11}    m$\times$n  &       & Avg   & Max   &       &       & Avg   & Max   &       & Avg   & Max  \\
    \midrule
    5$\times$25 &       & 0.15  & 0.34  &       &       & \textbf{0.08} & 0.18  &       & 0.09  & 0.17 \\
    5$\times$50 &       & \textbf{0.26} & 0.57  &       &       & 1.04  & 1.50  &       & 0.29  & 0.55 \\
    5$\times$75 &       & \textbf{0.30} & 0.54  &       &       & 6.20   & 10.38 &       & -     & - \\
    5$\times$100 &       & \textbf{0.43} & 0.93  &       &       & 19.25 & 30.48 &       & -     & - \\
    10$\times$25 &       & 2.11  & 11.39 &       &       & \textbf{0.30} & 0.78  &       & 0.61   & 3.06 \\
    10$\times$50 &       & \textbf{1.33} & 4.12  &       &       & 6.85  & 11.20  &       & 7.84   & 35.47 \\
    10$\times$75 &       & \textbf{4.40} & 19.52 &       &       & 55.41 & 115.10 &       & -     & - \\
    10$\times$100 &       & \textbf{15.41} & 62.08 &       &       & 334.64 & 1447.67 &       & -     & - \\
    15$\times$25 &       & 0.91  & 3.91  &       &       & \textbf{0.30} & 0.43  &       & -     & - \\
    15$\times$50 &       & \textbf{0.93} & 1.85  &       &       & 8.42  & 16.80  &       & -     & - \\
    15$\times$75 &       & \textbf{2.58} & 6.06  &       &       & 130.84 & 395.43 &       & -     & - \\
    15$\times$100 &       & \textbf{2.72} & 13.52 &       &       & 122.21 & 273.50 &       & -     & - \\
    20$\times$75 &       & 13.28 & 111.66 &       &       & \textbf{11.85} & 17.32 &       & -     & - \\
    20$\times$100 &       & \textbf{15.40} & 92.13 &       &       & 134.98   & 657.88 &       & -     & - \\
    25$\times$75 &       & 19.68 & 52.24 &       &       & \textbf{12.76} & 16.85 &       & -     & - \\
    25$\times$100 &       & \textbf{1.08} & 3.47  &       &       & 90.78 & 175.35 &       & -     & - \\
    \midrule
    Avg   &       & \textbf{5.06} & 24.02 &       &       & 58.49 & 198.18 &       & -     & - \\
    \bottomrule
    \end{tabular}%
     \begin{tablenotes}[normal,flushleft]
\item - denotes that the result is not provided by the respective author
\end{tablenotes}
  \label{table:tabprodmulti2}%
  \end{threeparttable}
\end{table}%

\begin{table}[htbp]
  \centering
     \begin{threeparttable}
  \caption{Experimental results of Production-Transportation problem with multiple sourcing ($\alpha = 0.9$)}
    \begin{tabular}{ccccccccccc}
    \toprule
          &       & \multicolumn{9}{c}{CPU Time (seconds)} \\
\cmidrule{3-11}          &       & \multicolumn{2}{c}{IA Algorithm } &       &       & \multicolumn{2}{c}{\cite{kuno2000lagrangian}} &       & \multicolumn{2}{c}{\cite{saif2016supply}} \\
\cmidrule{3-4}\cmidrule{7-8}\cmidrule{10-11}    m$\times$n  &       & Avg   & Max   &       &       & Avg   & Max   &       & Avg   & Max  \\
\cmidrule{1-4}\cmidrule{7-11}    5$\times$25 &       & 0.17  & 0.84  &       &       & 0.04  & 0.05  &       & \textbf{0.03} & 0.08 \\
    5$\times$50 &       & 0.15  & 0.67  &       &       & 0.60  & 1.08  &       & \textbf{0.08} & 0.22 \\
    5$\times$75 &       & 0.10  & 0.27  &       &       & -     & -     &       & -     & - \\
    5$\times$100 &       & 0.21  & 0.62  &       &       & -     & -     &       & -     & - \\
    10$\times$25 &       & 0.17  & 0.84  &       &       & 0.12   & 0.13   &       & \textbf{0.07} & 0.23 \\
    10$\times$50 &       & \textbf{0.20} & 0.51  &       &       & 1.07   & 1.65   &       & 1.26  & 7.27 \\
    10$\times$75 &       & 0.49  & 3.01  &       &       & -     & -     &       & -     & - \\
    10$\times$100 &       & 0.23  & 0.63  &       &       & -     & -     &       & -     & - \\
    15$\times$25 &       & \textbf{0.16} & 0.45  &       &       & 0.24   & 0.28   &       & -     & - \\
    15$\times$50 &       & \textbf{0.18} & 0.31  &       &       & 1.48   & 1.78   &       & -     & - \\
    15$\times$75 &       & 0.25  & 0.51  &       &       & -     & -     &       & -     & - \\
    15$\times$100 &       & 0.27  & 0.48  &       &       & -     & -     &       & -     & - \\
    20$\times$75 &       & 0.17  & 0.46  &       &       & -     & -     &       & -     & - \\
    20$\times$100 &       & 0.12  & 0.21  &       &       & -     & -     &       & -     & - \\
    25$\times$75 &       & 5.52  & 10.15 &       &       & -     & -     &       & -     & - \\
    25$\times$100 &       & 6.25  & 13.64 &       &       & -     & -     &       & -     & - \\
    \midrule
    Avg   &       & 0.91  & 2.10  &       &       & -     & -     &       & -     & - \\
    \bottomrule
    \end{tabular}%
     \begin{tablenotes}[normal,flushleft]
\item - denotes that the result is not provided by the respective author
\end{tablenotes}
  \label{table:tabprodmulti3}%
  \end{threeparttable}
\end{table}%

\begin{figure}
		\centering
 	\includegraphics[width=0.6\linewidth]{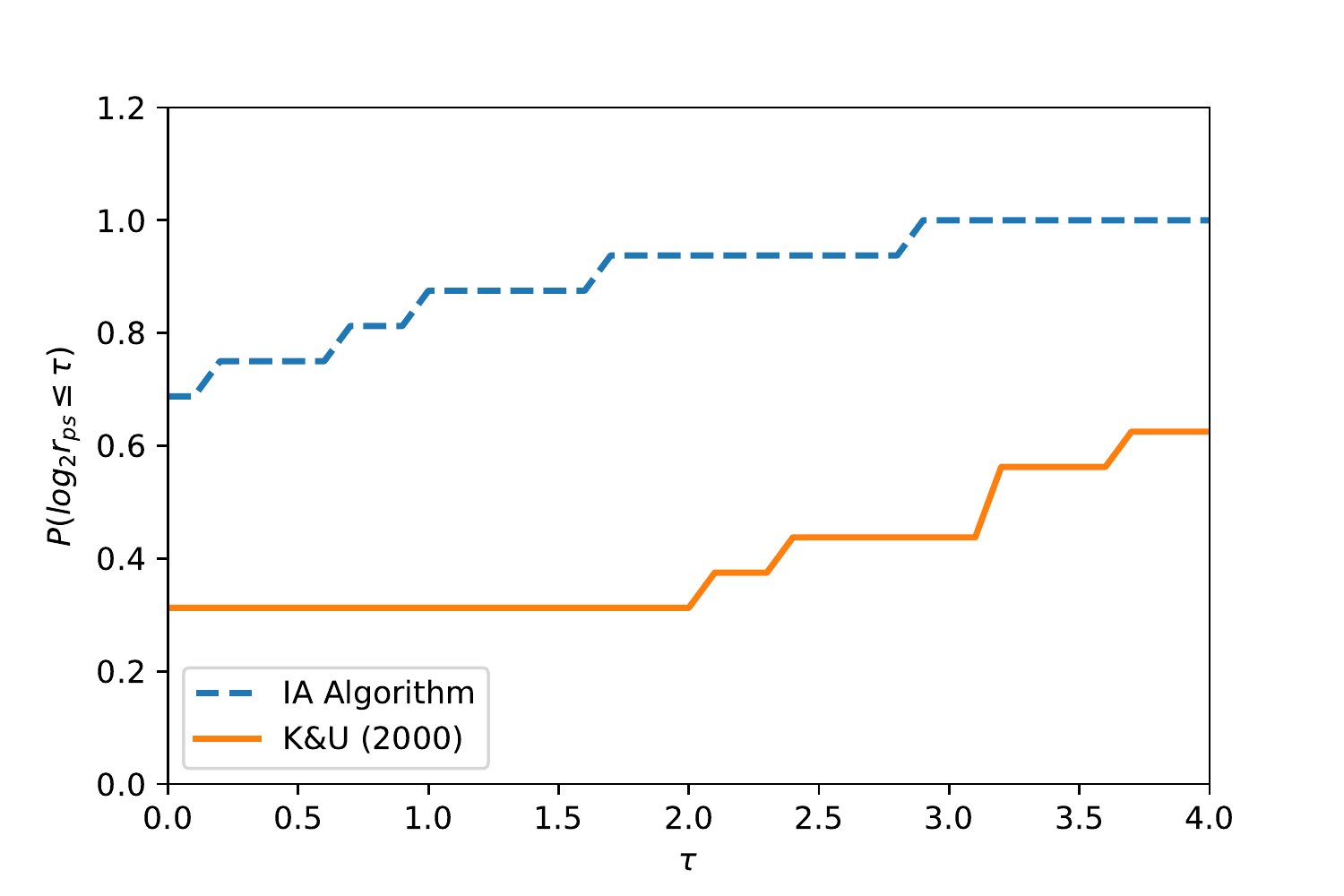}
 	 \caption{Performance profile of production-transportation problem with multiple sourcing for $\alpha=0.75$}
		\label{fig:perprofileprod}
\end{figure}
For the production-transportation problem with single sourcing, we provide a comparison of the computational performance of the IA algorithm only with \cite{saif2016supply} since the study by \cite{kuno2000lagrangian} is restricted to only the multiple sourcing version of the problem. The computational results of the two methods for the single sourcing version are reported in Tables~\ref{table:tabprodsingle1}-\ref{table:tabprodsingle3}. For each problem size, the better of the two average CPU times is highlighted in boldface. Once again, like the multiple sourcing case, missing values in some of the rows indicate that \cite{saif2016supply} did not provide results for those data instances. Please note that when the capacity is tight (i.e., $\alpha$ is high), the single sourcing constraints (i.e., $x_{ij} \in \{0, 1\} \: \forall (i, j) \in E$) become increasingly difficult to satisfy as $m=|V|$ starts approaching $n=|U|$. For, this reason, the instances of sizes $m=10$, $n=25$; $m=15$, $n=25$; and $m=15$, $n=50$ became infeasible for $\alpha=0.9$, and the corresponding results are not reported in Table~\ref{table:tabprodsingle3}. Clearly, the IA algorithm outperforms the method by \cite{saif2016supply} by at least one order of magnitude on all the instances for which \cite{saif2016supply} have provided their results. We further test the efficacy of the IA method on even larger instances, the corresponding results are provided in Table~\ref{table:tabprodsingle4}. Some of these problem instances become computationally very difficult to solve, for which we set a maximum CPU time limit of $7$ hours. Clearly, the IA algorithm is able to solve all these instances within less than a 1\% optimality gap within the time limit.\par
\begin{table}[htbp]
  \centering
  \begin{threeparttable}
  \caption{Experimental results of Production-Transportation problem with single sourcing ($\alpha = 0.6$)}
    \begin{tabular}{cllllll}
    \toprule
          &       & \multicolumn{5}{c}{CPU Time (seconds)} \\
\cmidrule{3-7}          &       & \multicolumn{2}{c}{IA Algorithm} &       & \multicolumn{2}{c}{\cite{saif2016supply}} \\
\cmidrule{3-4}\cmidrule{6-7}    m$\times$n  &       & \multicolumn{1}{p{4.215em}}{Avg} & \multicolumn{1}{p{4.215em}}{Max} &       & \multicolumn{1}{p{4.215em}}{Avg } & \multicolumn{1}{p{4.215em}}{Max } \\
\toprule    5$\times$25 &       & \textbf{0.62} & 2.30  &       & 1.79  & 3.73 \\
    5$\times$50 &       & \textbf{1.00} & 2.66  &       & 6.04  & 13.38 \\
    5$\times$75 &       & 2.63  & 8.06  &       & -     & - \\
    5$\times$100 &       & 3.90  & 23.24 &       & -     & - \\
    10$\times$25 &       & \textbf{2.24} & 10.09 &       & 22.05 & 40.78 \\
    10$\times$50 &       & \textbf{39.09} & 133.69 &       & 573.93 & 1710.85 \\
    10$\times$75 &       & 183.42 & 709.86 &       & -     & - \\
    15$\times$25 &       & 23.85 & 112.36 &       & -     & - \\
    15$\times$50 &       & 1341.52 & 5303.48 &       & -     & - \\
    Avg   &       & 177.59 & 700.64 &       & -     & - \\
    \bottomrule
    \end{tabular}%
    \begin{tablenotes}[normal,flushleft]
\item - denotes that the result is not provided by the respective author
\end{tablenotes}
 \label{table:tabprodsingle1}%
\end{threeparttable}
\end{table}%
\begin{table}[htbp]
  \centering
  \begin{threeparttable}
  \caption{Experimental results of Production-Transportation problem with single sourcing ($\alpha = 0.75$)}
    \begin{tabular}{cclllll}
    \toprule
          &       & \multicolumn{5}{c}{CPU Time (seconds)} \\
\cmidrule{3-7}          &       & \multicolumn{2}{c}{IA Algorithm} &       & \multicolumn{2}{c}{\cite{saif2016supply}} \\
\cmidrule{3-4}\cmidrule{6-7}    m$\times$n  &       & \multicolumn{1}{p{4.215em}}{Avg} & \multicolumn{1}{p{4.215em}}{Max} &       & \multicolumn{1}{p{4.215em}}{Avg } & \multicolumn{1}{p{4.215em}}{Max } \\
\toprule   5$\times$25 &       & \textbf{0.23} & 0.52  &       & 1.44  & 2.59 \\
    5$\times$50 &       & \textbf{0.42} & 0.89  &       & 4.17  & 6.65 \\
    5$\times$75 &       & 0.73  & 1.10  &       & -     & - \\
    5$\times$100 &       & 11.75 & 57.04 &       & -     & - \\
    10$\times$25 &       & \textbf{1.21} & 7.08  &       & 27.92 & 53.88 \\
    10$\times$50 &       & \textbf{52.09} & 273.19 &       & 455.51 & 1495.77 \\
    10$\times$75 &       & 51.91 & 272.93 &       & -     & - \\
    15$\times$25 &       & 20.26 & 191.08 &       & -     & - \\
    15$\times$50 &       & 1009.3 & 6672.76 &       & -     & - \\
    \midrule
    Avg   &       & 127.54 & 830.73 &       & -     & - \\
    \bottomrule
    \end{tabular}%
\begin{tablenotes}[normal,flushleft]
\item - denotes that the result is not provided by the respective author
\end{tablenotes}
 \label{table:tabprodsingle2}%
\end{threeparttable}
\end{table}%

\begin{table}[htbp]
  \centering
  \begin{threeparttable}
  \caption{Experimental results of Production-Transportation problem with single sourcing ($\alpha = 0.9$)}
    \begin{tabular}{cclllll}
    \toprule
          &       & \multicolumn{5}{c}{CPU Time (seconds)} \\
\cmidrule{3-7}          &       & \multicolumn{2}{c}{IA Algorithm} &       & \multicolumn{2}{c}{\cite{saif2016supply}} \\
\cmidrule{3-4}\cmidrule{6-7}    m$\times$n  &       & \multicolumn{1}{p{4.215em}}{Avg} & \multicolumn{1}{p{4.215em}}{Max} &       & \multicolumn{1}{p{4.215em}}{Avg } & \multicolumn{1}{p{4.215em}}{Max } \\
\toprule    5$\times$25 &       & \textbf{0.08} & 0.18  &       & 0.35  & 0.45 \\
    5$\times$50 &       & \textbf{0.48} & 2.02  &       & 1.04  & 2.39 \\
    5$\times$75 &       & 0.68  & 4.53  &       & -     & - \\
    5$\times$100 &       & 2.61  & 10.72 &       & -     & - \\
    10$\times$50 &       & \textbf{0.10} & 0.17  &       & 23.37 & 24.59 \\
    10$\times$75 &       & 62.76 & 356.63 &       & -     & - \\
    15$\times$75 &       & 0.15  & 0.19  &       & -     & - \\
    \midrule
    Avg   &       & 9.55  & 53.49 &       & -     & - \\
    \bottomrule
    \end{tabular}%
     \begin{tablenotes}[normal,flushleft]
\item - denotes that the result is not provided by the respective author
\end{tablenotes}
  \label{table:tabprodsingle3}%
\end{threeparttable}
\end{table}%
\begin{table}[htbp]
  \centering
  \caption{Experimental results of Production-Transportation problem with single sourcing}
    \begin{tabular}{cccccccccc}
    \toprule
          &       & \multicolumn{4}{c}{Optimality Gap ($\%$)}       &       & \multicolumn{3}{c}{CPU Time (seconds)} \\
\cmidrule{3-6}\cmidrule{8-10}          &       & \multicolumn{8}{c}{m$\times$n} \\
\cmidrule{3-10}          &       & 10$\times$100 &       & 15$\times$75 & 15$\times$100 &       & 10$\times$100 & 15$\times$75 & 15$\times$100 \\
    \midrule
    \multicolumn{10}{c}{$\alpha= 0.6$} \\
    Avg   &       & 0.01  &       & 0.05  & 0.06  &       & 13570.38 & 7532.62 & 15327.96 \\
    Min   &       & 0.00  &       & 0.00  & 0     &       & 853.90 & 25.06 & 283.89 \\
    Max   &       & 0.05  &       & 0.40  & 0.27  &       & 25200.00 & 25200.00 & 25200.00 \\
          &       &       &       &       &       &       &       &       &  \\
    \multicolumn{10}{c}{$\alpha=0.75$} \\
    Avg   &       & 0.00  &       & 0.06  & 0.09  &       & 5800.01 & 21335.06 & 21056.04 \\
    Min   &       & 0.00  &       & 0.00  & 0     &       & 2.37 & 3.82 & 67.66 \\
    Max   &       & 0.02  &       & 0.25  & 0.25  &       & 25200.00 & 25200.00 & 25200.00 \\
          &       &       &       &       &       &       &       &       &  \\
    \multicolumn{10}{c}{$\alpha=0.9$} \\
    Avg   &       & 0.02  &       & 0.00  & 0.01  &       & 10146.33 & 0.14 & 10413.67 \\
    Min   &       & 0     &       & 0.00  & 0.00  &       & 2.14 & 0.13 & 1.57 \\
    Max   &       & 0.09  &       & 0.00  & 0.03  &       & 25200.00 & 0.19 & 25200.00 \\
    \bottomrule
    \end{tabular}%
  \label{table:tabprodsingle4}%
\end{table}%

\section{Conclusions}\label{sec:conclusions}
In this paper, we proposed an exact algorithm for solving concave minimization problems using a piecewise-linear inner-approximation of the concave function. The inner-approximation of the concave function results in a bilevel program, which is solved using a KKT-based approach. We make theoretical contributions by identifying a tight value of BigM for general problems that can help in efficiently solving the bilevel program that provides a lower bound to the original problem. 
Our proposed algorithm guarantees improvement in the lower bound at each iteration and terminates at the global optimal solution. The algorithm has also been tested on two common application problems, namely, the concave knapsack problem and the production-transportation problem. Our extensive computational results show that our algorithm is able to significantly outperform the specialized methods that were reported in the literature for these two classes of problems. We believe that the algorithm will be useful for exactly solving a large number of other concave minimization applications for which practitioners often have to resort to customized methods or heuristics for solving the problem.



\bibliography{mybib} 

\newpage

\appendix
\section{Illustrative Example for Concavity in Objective Function}\label{sec:NumEx}
To illustrate the algorithm, we consider a small-size numerical example:
\begin{align}
	\min_{x} & \; \; \phi(x) = -5x_1^{\frac{3}{2}} + 8x_1 - 30 x_2 \label{eq:ex1obj}\\
	\st & \;\;\; -9x_1 + 5x_2 \leq 9 \\
	& \;\;\; x_1 - 6x_2 \leq 6 \\
	& \;\;\; 3x_1 + x_2 \leq 9 \\
	& x \in X = \{x_j \in \mathbb{Z}^n |1 \leq x_j \leq 7, j = 1,2 \} \label{eq:ex1lc}
\end{align}
Iteration 1: We replace the concave function $-x_1^\frac{3}{2}$ by a new variable $t_1$.
\begin{align*}
	\min_{x} & \; \; \phi(x) = 5t_1 + 8x_1 - 30 x_2 \\
	\st & \;\;\; -9x_1 + 5x_2 \leq 9 \\
	& \;\;\; x_1 - 6x_2 \leq 6 \\
	& \;\;\; 3x_1 + x_2 \leq 9 \\
	& \;\;\; x \in X = \{x_j \in \mathbb{Z}^n |1 \leq x_j \leq 7, j = 1,2 \}\\
	& \;\;\; t_1 \ge -x_1^{\frac{3}{2}}
\end{align*}
Next, we replace the concave constraints with inner-approximation generated using two points, $x_1 \in \{1,7\}$, which gives us the relaxation of the problem \eqref{eq:ex1obj}-\eqref{eq:ex1lc} as bilevel program.
Let $g(x_1) = -x_1^\frac{3}{2}$, then $g(1) = -1$, $g(7) = -18.52$.
\begin{align}
\min_{x} & \; \; \phi(x) = 5t_1 + 8x_1 - 30 x_2 \label{eq:exobjknap1}\\
	\st & \;\;\; -9x_1 + 5x_2 \leq 9 \label{eq:exconsknap1itr1}\\
	& \;\;\; x_1 - 6x_2 \leq 6 \label{eq:exconsknap2itr1}\\
	& \;\;\; 3x_1 + x_2 \leq 9 \label{eq:exconsknap3itr1}\\
	& \;\;\; x \in X = \{x_j \in \mathbb{Z}^n |1 \leq x_j \leq 7, j = 1,2 \} \label{eq:exconsknap4itr1}\\
	& \;\;\; \mu \in \argmax_{\mu} \left\{ -\mu_1 - 18.52 \mu_2 : \mu_1 + \mu_2 = 1, \mu_1 + 7\mu_2 = x_1,-\mu_1 \leq 0 -\mu_2 \leq 0 \right\} \label{eq:exconsknap5itr1}\\
	& \;\;\; t_1 \ge -\mu_1 - 18.52 \mu_2 \label{eq:exconsknap6itr1}
\end{align}
Let $\gamma_1,\gamma_2,\gamma_3, \textrm{and}, \gamma_4$ be the Lagrange multipliers for the constraints in \eqref{eq:exconsknap5itr1}, then the KKT conditions for the lower level program in \eqref{eq:exconsknap5itr1} can be written as follows:
\begin{align}
&1 + \gamma_1 + \gamma_2 - \gamma_3 = 0 \label{eq:exconsknap7itr1}\\
&18.52 + \gamma_1 + 7 \gamma_2 - \gamma_4 = 0 \label{eq:exconsknap8itr1}\\
&-\mu_1 \gamma_3 = 0 \label{eq:exconsknap9itr1}\\
&-\mu_2 \gamma_4 = 0 \label{eq:exconsknap10itr1}\\
&\mu_1, \mu_2 , \gamma_3, \gamma_4 \geq 0 \label{eq:exconsknap11itr1}\\ 
&\gamma_1, \gamma_2 - \textrm{unrestricted} \label{eq:exconsknap12itr1}
\end{align}
We linearize equations \eqref{eq:exconsknap9itr1} and \eqref{eq:exconsknap10itr1} using the BigM values proposed in Theorem~\ref{th:bigM}. 
\begin{align}
&\mu_1 \leq M Z_1 \label{eq:exconsknap13itr1}\\
&\gamma_3 \leq M (1 - Z_1) \label{eq:exconsknap14itr1}\\
&\mu_2 \leq M Z_2 \label{eq:exconsknap15itr1}\\
&\gamma_4 \leq M (1 - Z_2) \label{eq:exconsknap16itr1}\\
&Z_1 , Z_2 \in \{0,1\} \label{eq:exconsknap17itr1}
\end{align}
The relaxed model for the original problem (\eqref{eq:ex1obj}-\eqref{eq:ex1lc}) is given below as a mixed integer linear program (MILP). 

\begin{align*}
[EX1-1] \min & 5 t_1 + 8 x_1 - 30 x_2 \\
\st &t_1 \geq -\mu_1 - 18.52 \mu_2 \\
& \;\;\; \mu_1 + 7\mu_2 = x_1 \\
	& \;\;\;\mu_1 + \mu_2 = 1 \\
 &\;\;\;\eqref{eq:exconsknap1itr1} - \eqref{eq:exconsknap4itr1}, \eqref{eq:exconsknap7itr1} - \eqref{eq:exconsknap8itr1}, \eqref{eq:exconsknap11itr1} - \eqref{eq:exconsknap17itr1} 
\end{align*}
The above formulation can be solved using an MILP solver to arrive at the following solution, $x_1 = 2, x_2 = 3, \textrm{ objective value } = -93.6$.
Hence, the lower bound is -93.6 and the upper bound is -88.14.\\
Iteration 2: The solution obtained from iteration 1 gives an additional point, $x_1 = 2$, to approximate $g(x_1)= -x_1^\frac{3}{2}$, where $g(2) = -2.83$. The updated problem with an additional point is given as follows:
\begin{align}
\min_{x} & \; \; \phi(x) = 5t_1 + 8x_1 - 30 x_2 \label{eq:exobjknap2}\\
	\st & \;\;\; \eqref{eq:exconsknap1itr1} - \eqref{eq:exconsknap4itr1} \\
	& \;\;\; \mu \in \argmax_{\mu} \bigg\{ -\mu_1 - 18.52 \mu_2 - 2.83 \mu_3\ : \\ 
	& \hspace{25mm} \sum_{i=1}^{3}\mu_i = 1, \mu_1 + 7\mu_2 + 2 \mu_3= x_1,-\mu_i \leq 0 \; \forall \; i=1,2,3 \bigg\} \label{eq:exconsknap1itr2}\\
	& \;\;\; t_1 \ge -\mu_1 - 18.52 \mu_2 - 2.83 \mu_3 \label{eq:exconsknap2itr2}
\end{align}
Let $\gamma_1,\gamma_2,\gamma_3, \gamma_4, \textrm{and}, \gamma_5$ be the Lagrange Multipliers for the constraints in \eqref{eq:exconsknap1itr2}, the the following represents the KKT conditions for \eqref{eq:exconsknap1itr2}.
\begin{align}
&1 + \gamma_1 + \gamma_2 - \gamma_3 = 0 \label{eq:exconsknap3itr2}\\
&18.52 + \gamma_1 + 7 \gamma_2 - \gamma_4 = 0 \label{eq:exconsknap4itr2}\\
&2.83 + \gamma_1 + 2 \gamma_2 - \gamma_5 = 0 \label{eq:exconsknap5itr2}\\
&-\mu_1 \gamma_3 = 0 \label{eq:exconsknap6itr2}\\
&-\mu_2 \gamma_4 = 0 \label{eq:exconsknap7itr2}\\
&-\mu_3 \gamma_5 = 0 \label{eq:exconsknap8itr2}\\
&\mu_1, \mu_2 , \gamma_3, \gamma_4, \gamma_5 \geq 0 \label{eq:exconsknap9itr2}\\
&\gamma_1, \gamma_2 - \textrm{unrestricted} \label{eq:exconsknap10itr2}
\end{align}
We once again linearize equation \eqref{eq:exconsknap6itr2}- \eqref{eq:exconsknap8itr2} using the BigM values proposed in Theorem~\ref{th:bigM}. 
\begin{align}
&\mu_1 \leq M Z_1 \label{eq:exconsknap11itr2}\\
&\gamma_3 \leq M (1 - Z_1) \label{eq:exconsknap12itr2}\\
&\mu_2 \leq M Z_2 \label{eq:exconsknap13itr2}\\
&\gamma_4 \leq M (1 - Z_2) \label{eq:exconsknap14itr2}\\
&\mu_3 \leq M Z_3 \label{eq:exconsknap15itr2}\\
&\gamma_5 \leq M (1 - Z_3) \label{eq:exconsknap16itr2}\\
&Z_1 , Z_2, Z_3 \in \{0,1\} \label{eq:exconsknap17itr2}
\end{align}
A tighter relaxed problem for \eqref{eq:ex1obj}-\eqref{eq:ex1lc} as compared to the one in iteration 1 is given as follows:
\begin{align*}
[EX1-2] \min & 5 t_1 + 8 x_1 - 30 x_2 \\
\st &t_1 \geq -\mu_1 - 18.52 \mu_2 - 2.83 \mu_3\\
& \mu_1 + \mu_2 + \mu_3= 1 \\
	& \;\;\; \mu_1 + 7\mu_2 + 2 \mu_3= x_1 \\
 &\;\;\; \eqref{eq:exconsknap1itr1} - \eqref{eq:exconsknap4itr1}, \eqref{eq:exconsknap3itr2} - \eqref{eq:exconsknap5itr2}, \eqref{eq:exconsknap9itr2} - \eqref{eq:exconsknap17itr2} 
\end{align*} 
Solution of the above formulation is $x_1 = 2, x_2 = 3, \textrm{ objective value } = -88.15$. The lower bound is -88.15 and the upper bound is -88.14. Additional iterations would lead to further tightening of the bounds. 

\section{Illustrative Example for Concavity in Constraints}\label{sec:constraints}
The proposed algorithm can also solve the class of problems in which concavity is present in the constraints. We illustrate this using an example problem that has been taken from \cite{floudas1999handbook} (refer to Section 12.2.2 in the handbook).  However, for problems with concavity in constraints we have not been able to propose a tight value for BigM.
\begin{align}
	\min_{x,y} & \; \; -0.7y + 5(x_1 - 0.5)^2 + 0.8
	\label{eq:CCexobj}\\
	\st & \;\;\; x_2 \geq -e^{(x_1 - 0.2)} \label{eq:CCexcons1}\\
	& \;\;\; x_2 - 1.1y \leq -1 \label{eq:CCexcons2}\\
	& \;\;\; x_1 - 1.2y \leq 0.2 \label{eq:CCexcons3}\\
	& \;\;\; 0.2 \leq x_1 \leq 1 \label{eq:CCexcons4}\\
	& \;\;\; -2.22554 \leq x_2 \leq -1 \label{eq:CCexcons5}\\
	& \;\;\; y \in \{0,1\} \label{eq:CCexcons6}
\end{align}
The above problem has a convex objective function, but it is nonconvex because of equation \eqref{eq:CCexcons1}. Let us start the iterations with two points, $x_1 \in \{0.2,1\}$. 
Let $h(x_1) = -e^{(x_1 - 0.2)}$, then $h(0.2) = -1, h(1) = -2.22$. Next, we reformulate the problem \eqref{eq:CCexobj}-\eqref{eq:CCexcons6} by replacing the concave constraint with its inner-approximation generated using two points.
\begin{align}
\min_{x} & \; \; \phi(x) = -0.7y + 5(x_1 - 0.5)^2 + 0.8 \\
	\st & \;\;\; \eqref{eq:CCexcons2} - \eqref{eq:CCexcons6}\\
	& \;\;\; \mu \in \argmax_{\mu} \left\{ -\mu_1 - 2.22 \mu_2 : \mu_1 + \mu_2 = 1, 0.2 \mu_1 + \mu_2 = x_1,-\mu_1 \leq 0 -\mu_2 \leq 0 \right\} \label{eq:CCexcons7}\\
	& \;\;\; x_2 \ge -\mu_1 - 2.22 \mu_2 \label{eq:CCexcons8}
\end{align}
Let $\lambda_1,\lambda_2,\lambda_3, \textrm{and}, \lambda_4$ be the Lagrange multipliers of the constraints in \eqref{eq:CCexcons7} then KKT conditions for \eqref{eq:CCexcons7} can be written as:
\begin{align}
&1 + \lambda_1 + 0.2\lambda_2 -\lambda_3 = 0 \label{eq:CCexcons11itr1}\\
&2.22 + \lambda_1 + \lambda_2 -\lambda_4= 0 \label{eq:CCexcons12itr1}\\
&-\mu_1 \lambda_3 = 0 \label{eq:CCexcons13itr1}\\
&-\mu_2 \lambda_4 = 0 \label{eq:CCexcons14itr1}\\
&\mu_1, \mu_2 , \lambda_3, \lambda_4 \geq 0 \label{eq:CCexcons15itr1}\\
&\lambda_1, \lambda_2 - \textrm{unrestricted} \label{eq:CCexcons16itr1}
\end{align}
We linearize equations \eqref{eq:CCexcons13itr1} and \eqref{eq:CCexcons14itr1} using a BigM value.
\begin{align}
\mu_1 &\leq M Z_1 \label{eq:CCexcons17itr1}\\
\lambda_3 &\leq M (1 - Z_1) \label{eq:CCexcons18itr1}\\
\mu_2 &\leq M Z_2 \label{eq:CCexcons19itr1}\\
\lambda_4 &\leq M (1 - Z_2) \label{eq:CCexcons20itr1}\\
Z_1 , Z_2 &\in \{0,1\} \label{eq:CCexcons21itr1}
\end{align}
At iteration 1 we solve the following quadratic program: 
\begin{align*}
[EX2-1] \min & -0.7y + 5(x_1 - 0.5)^2 + 0.8 \\
\st & x_2 \geq -\mu_1 - 2.22 \mu_2 \\
& \;\;\; 0.2 \mu_1 + \mu_2 = x_1 \\
	& \mu_1 + \mu_2 = 1 \\
 &\eqref{eq:CCexcons2} - \eqref{eq:CCexcons6}, \eqref{eq:CCexcons11itr1} - \eqref{eq:CCexcons12itr1}, \eqref{eq:CCexcons15itr1} - \eqref{eq:CCexcons21itr1} 
\end{align*}
The solution of the $EX2-1$ is given as, $x_1 = 0.921, x_2 = -2.1, y = 1, \textrm{ objective value = 0.9875}$. The above solution gives an additional point $x_1 = 0.921$ to approximate the $h(x_1)= e^{(x_1 - 0.2)}$, where $h(0.921) = 2.06$. Hence the updated problem is as follows:
\begin{align}
\min_{x} & \; \; \phi(x) = -0.7y + 5(x_1 - 0.5)^2 + 0.8 \\
	\st & \;\;\; \eqref{eq:CCexcons2} - \eqref{eq:CCexcons6} \\
	& \mu \in \argmax_{\mu} \bigg\{ -\mu_1 - 2.22 \mu_2 - 2.06\mu_3 :\\
	& \hspace{25mm}\sum_{i=1}^{3}\mu_i = 1, 0.2\mu_1 + \mu_2 + 0.921 \mu_3= x_1,-\mu_i \leq 0 \; \forall \; i=1,2,3 \bigg\} \label{eq:CCexcons11itr2}\\
	& x_2 \ge -\mu_1 - 2.22 \mu_2 - 2.06\mu_3 \label{eq:CCexcons11itr3}
\end{align}
Let $\lambda_1,\lambda_2,\lambda_3, \lambda_4, \textrm{and}, \gamma_5$ be Lagrange multipliers for the constraints of equation \eqref{eq:CCexcons11itr2} then the corresponding KKT conditions are as follows:
\begin{align}
&1 + \lambda_1 + 0.2 \lambda_2 - \lambda_3 = 0 \label{eq:CCexcons12itr2}\\
&2.22 + \lambda_1 + \lambda_2 - \lambda_4 = 0 \label{eq:CCexcons13itr2}\\
&2.06 + \lambda_1 + 0.921 \lambda_2 - \lambda_5 = 0 \label{eq:CCexcons14itr2}\\
&-\mu_1 \lambda_3 = 0 \label{eq:CCexcons15itr2}\\
&-\mu_2 \lambda_4 = 0 \label{eq:CCexcons16itr2}\\
&-\mu_3 \lambda_5 = 0 \label{eq:CCexcons17itr2}\\
&\mu_1, \mu_2 , \lambda_3, \lambda_4, \lambda_5 \geq 0 \label{eq:CCexcons18itr2}\\
&\lambda_1, \lambda_2 - \textrm{unrestricted} \label{eq:CCexcons19itr2}
\end{align}
Upon linearization of \eqref{eq:CCexcons15itr2}-\eqref{eq:CCexcons17itr2} using a BigM value we get: 
\begin{align}
&\mu_1 \leq M Z_1 \label{eq:CCexcons20itr2}\\
&\lambda_3 \leq M (1 - Z_1) \label{eq:CCexcons21itr2}\\
&\mu_2 \leq M Z_2 \label{eq:CCexcons22itr2}\\
&\lambda_4 \leq M (1 - Z_2) \label{eq:CCexcons23itr2}\\
&\mu_3 \leq M Z_3 \label{eq:CCexcons24itr2}\\
&\lambda_5 \leq M (1 - Z_3) \label{eq:CCexcons25itr2}\\
&Z_1 , Z_2, Z_3 \in \{0,1\} \label{eq:CCexcons26itr2}
\end{align}
At iteration 2 we solve the following quadratic program:
\begin{align*}
[EX2-2]\min & -0.7y + 5(x_1 - 0.5)^2 + 0.8 \\
\st & \;\;\; x_2 \geq -\mu_1 - 2.22 \mu_2 -2.06 \mu_3 \\
&\;\;\; \mu_1 + \mu_2 + \mu_3= 1 \\
	& \;\;\; 0.2\mu_1 + \mu_2 + 0.921 \mu_3= x_1 \\
 &\;\;\; \eqref{eq:CCexcons2} - \eqref{eq:CCexcons6}, \eqref{eq:CCexcons12itr2} - \eqref{eq:CCexcons14itr2}, \eqref{eq:CCexcons18itr2} - \eqref{eq:CCexcons26itr2} 
\end{align*}
The solution of $EX2-2$ is $x_1 = 0.9419, x_2 = -2.1, y = 1, \textrm{ objective value } = 1.0769$.

The new point $x_1 = 0.9419$ is used in iteration 3, where the solution is $x_1 = 0.9419, x_2 = -2.1, y = 1$ and the lower bound is $1.0765$. The algorithm can be terminated when the violation for the concave constraint is small. In this case, we stop further iterations of the algorithm. The known global optimal solution for the problem is
$x_1 = 0.9419, x_2 = -2.1, y = 1$ with an optimal objective value of $1.0765$ \citep{floudas1999handbook}.
\section{IA Algorithm Versus Gurobi Solver for Concave Quadratic Knapsack Problem} \label{gurobi_comparision}
Non-convex quadratic programs can be solved to optimality by some commercial solvers, like Gurobi. The solvers exploit the quadratic terms in the formulation and convert the non-convex quadratic program into a bilinear program. The bilinear terms can then be handled using envelopes, like McCormick envelopes, in a spatial branching framework. However, this idea cannot be extended to optimization problems with general concave functions. Table~\ref{tab:quadgurabi} provides a comparison of the computational performance of the proposed IA algorithm against Gurobi on the concave quadratic test case. Note that the computational experiments reported in the table have been carried out on the same PC as reported in Section \ref{sec:compuExperKnapsack}. The actual computational times have been reported for both the approaches. Clearly, Gurobi is computationally more efficient than our proposed IA method as it exploits the quadratic structure of the functions in the problem. However, Gurobi solver cannot handle the other three classes (quartic, cubic, and logrithmic) of concave separable integer knapsack problems or the non-quadratic production-transportation problem discussed in this paper. 

\begin{table}[htbp]
  \centering
 \caption{Experimental result comparisons for quadratic knapsack problem}
    \begin{tabular}{ccccccccc}
    \toprule
     &  & \multicolumn{7}{c}{CPU Time (seconds)} \\
\cmidrule{3-9}     &  & \multicolumn{3}{c}{IA Algorithm with CPLEX} &  & \multicolumn{3}{c}{Gurobi} \\
\cmidrule{3-5}\cmidrule{7-9}    {Data set(n$\times$m)} &  & Avg & Min & Max  &  & Avg & Min  & Max \\
\toprule
    30$\times$10 &  & 2.01 &  0.12 & 7.34 &  & \textbf{0.25} & 0.08 & 0.54 \\
    40$\times$10 &  & 1.36 & 0.06 & 3.67 &  & \textbf{0.32} & 0.04 & 0.66 \\
    50$\times$10 &  & 1.43 & 0.29 & 4.57 &  & \textbf{0.35} & 0.10 & 0.93 \\
    80$\times$10 &  & 6.93 & 0.58 & 14.45 &  & \textbf{0.62} & 0.11 & 2.10 \\
    150$\times$10 &  & 4.53 & 0.17 & 16.62 &  & \textbf{0.50} & 0.22 & 0.97 \\
    20$\times$15 &  & 1.91 & 0.17 & 6.33 &  & \textbf{0.28} & 0.12 & 0.57 \\
    30$\times$15 &  & 16.01 & 0.29 & 47.35 &  & \textbf{0.70} & 0.27 & 1.69 \\
    40$\times$15 &  & 31.53 & 0.64 & 86.52 &  & \textbf{1.75} & 0.16 & 7.68 \\
    \bottomrule
    Avg &  & 8.21 & 0.29 & 23.36 &  & \textbf{0.60} & 0.14 & 1.89 \\
    \bottomrule
    \end{tabular}%
  \label{tab:quadgurabi}%
\end{table}%

\section{Production-Transportation Problem with Single Sourcing } \label{prod_single}
In multiple sourcing, every destination may accept supply from multiple sources, but the destination can accept supply only from one source in case of single sourcing problem. Hence, in single sourcing binary restriction is imposed in $x_{ij}$ variable. The modified model for production-transportation problem can be stated as follows:
\begin{align}
	\min_{x,y} & \; \; \sum_{(i,j)\in E}c_{ij}x_{ij}d_j \; + \; \sum_{i\in V} \phi_i(y_i) \label{eq:prod-trans_sing_Obj}\\
	\st & \notag \\
	& \sum_{j\in U} x_{ij}d_j \leq y_i, && \forall \; i \in V \label{eq:prod-trans_sing_supply_cap}\\
	& y_i \leq k_i, && \forall \; i \in V \label{eq:prod-trans_sing_prod_cap}\\
	& \sum_{i\in V} x_{ij} \geq 1, && \forall \; j \in U \label{eq:prod-trans_sing_demand}\\
	& x_{ij} \in \{0,1\}, && \forall \; (i,j) \in E \label{eq:prod-trans_sing_x_nonneg}\\
	& y_i \geq 0, && \forall \; i \in \; V \label{eq:prod-trans_sing_y_nonneg}
\end{align}
\end{document}